\newtheorem{Satz}{Theorem}[section]
\newtheorem{Prop}[Satz]{Proposition}
\newtheorem{Lem}[Satz]{Lemma}
\newtheorem{Thm}[Satz]{Theorem}
\newtheorem{Cor}[Satz]{Corollary}
\theoremstyle{definition}
\newtheorem{Dfn}[Satz]{Definition}
\newtheorem{Bem}[Satz]{Remark}
\newtheorem{Ass}[Satz]{Assumption}
\newtheorem{question}[Satz]{Question}
\newcommand{\tr}{\operatorname{Tr}}
\newcommand{\C}{{\mathbb{C}}}
\newcommand{\T}{\mathrm{T}}
\newcommand{\ag}{\mathfrak{g}}
\newcommand{\ah}{\mathfrak{h}}
\newcommand{\ak}{\mathfrak{k}}
\newcommand{\abs}[1]{\vert #1\vert}
\newcommand{\nor}{\mathrm{nor}}
\newcommand{\eins}{{\mathchoice
{\mathrm 1\mskip-4.2mu\mathrm l}{\mathrm 1\mskip-4.2mu\mathrm l}
{\mathrm 1\mskip-3.9mu\mathrm l}{\mathrm 1\mskip-4.0mu\mathrm l}}}
\newcommand{\Id}{\operatorname{Id}}
\newcommand{\norm}[1]{\left \lVert #1 \right \rVert}
\renewcommand{\epsilon}{\varepsilon}
\newcommand{\R}{\ensuremath{\mathbb{R}}}
\newcommand{\s}{\ensuremath{\mathbb{S}}}
\numberwithin{equation}{section}
\newcommand{\Adj}{\ensuremath{\operatorname{Ad}}}
\providecommand{\customgenericname}{}
\newcommand{\newcustomtheorem}[2]{%
  \newenvironment{#1}[1]
  {%
   \renewcommand\customgenericname{#2}%
   \renewcommand\theinnercustomgeneric{##1}%
   \innercustomgeneric
  }
  {\endinnercustomgeneric}
}
\title{The Initial Value Problem
for Harmonic maps of Cohomogeneity One manifolds}
\author{Anna Siffert}
\address{Universität M\"unster, Mathematisches Institut\\
Einsteinstr. 62\\
48149 M\" unster\\
Germany}
\email{asiffert@uni-muenster.de}
\subjclass[2010]{58E20; 53C43}
\keywords{harmonic map; initial value problem}
\thanks{Anna Siffert thanks the German Research Foundation (DFG) for funding - Project-ID 427320536 - SFB 1442.
This project is part of Anna Siffert's project on the construction of harmonic maps between cohomogeneity one manifolds
from the SFB-funding period
2020-2024.
}
\begin{document}
\begin{abstract}
We set up and solve the initial value problem
for equivariant harmonic maps of cohomogeneity one manifolds, i.e. we show the local existence of a harmonic map in the neighborhood of a singular orbit. 
\end{abstract} 
\maketitle

\section{Introduction}
Harmonic maps between Riemannian manifolds and their generalizations are an active research area within geometric analysis, see e.g. the recent manuscripts \cite{MR4927650, BS1} and the references therein, as well as the book \cite{MR2044031}.

\smallskip

A smooth map $\varphi:(M,g)\rightarrow (N,h)$ between Riemannian manifolds is called harmonic if it is a critical point of the energy functional. 
In the following section, we provide a more detailed exposition of harmonic maps, closely following \cite{MR2389639}:
Let $\Omega\subset M$ be a domain with piecewise $C^1$-boundary $\partial\Omega$.
The energy integral of $\varphi$ over the domain $\Omega$ is defined as
\begin{align}
\label{energy}
    E_{\Omega}(\varphi)=\frac{1}{2}\int_{\Omega}\lvert d\varphi\rvert^2 v_g,
\end{align}
where $v_g$ denotes the volume form of $M$ with respect to the metric $g$.
Let $\Phi:M\times (-\epsilon,\epsilon)\rightarrow N, (x,t)\mapsto \Phi(x,t)=:\varphi_t(x)$, $\epsilon\in\mathbb{R}$ with $\epsilon>0$, be a smooth map with $\Phi(x,0)=\varphi(x)$, i.e. $\Phi$ is a one-parameter variation of $\varphi$. The map $\Phi$ is called supported in $\Omega$, if $\varphi_t=\varphi$ on $M\setminus\Omega^{\circ}$, where $\Omega^{\circ}$ denotes the interior of $\Omega$.
A smooth map $\varphi:(M,g)\rightarrow (N,h)$ is called harmonic map if it is critical point of (\ref{energy}). This means that for all compact domains $\Omega$ and all smooth one-parameter variations $\Phi$ of $\varphi$ supported in $\Omega$, we have
\begin{align*}
    \frac{d}{dt}E_{\Omega}(\varphi_t)_{\lvert t=0}=0,
\end{align*}
i.e. the first variation of the energy functional vanishes.

\smallskip

Straightforward computations yield that the first variation of the energy functional is given by
\begin{align*}
    \frac{d}{dt}E_{\Omega}(\varphi_t)_{\lvert t=0}=-\int_{M}\langle \tau(\varphi),\frac{\partial\varphi_t}{\partial t}_{\lvert t=0}\rangle v_g,
\end{align*}
where $\langle\,\cdot\,,\,\cdot\,\rangle$ denotes the inner product on $\varphi^{-1}TN$ which is induced from $h$ and
the tension field of $\varphi$, denoted by $\tau(\varphi)$, is defined by
\begin{align*}
 \tau(\varphi)=\sum_{i=1}^{\dim M}({}^{\varphi}\nabla_{e_i}(d\varphi\, e_i)-d\varphi(\nabla_{e_i}e_i)).   
\end{align*}
Here $\nabla$ denotes the Levi-Civita connection of $(M,g)$,
${}^{\varphi}\nabla$ is the pull-back of the Levi-Civita connection of $(N,h)$ to the bundle $\varphi^{-1}TN$ and $(e_i)_{i=1}^{\dim M}$ is a local orthonormal frame on $M$.
Clearly, $\varphi$ is harmonic if and only if 
\begin{align}
\label{tension}
    \tau(\varphi)=0.
\end{align}
In local coordinates the identity (\ref{tension}) constitutes 
a semilinear elliptic system of partial differential equations of second order.
There is no general solution theory for such systems, therefore solving the equation (\ref{tension}) is often a challenging undertaking.
A popular technique for developing solutions is to impose symmetry criteria so that the original problem is reduced to a simpler, and in the best-case scenario, a (partially) solvable, problem. 
Of course, one challenge is determining the appropriate symmetry conditions to construct a (partially) solvable problem.

\smallskip

One fruitful symmetry assumption in the context of harmonic maps is 
to study
equivariant maps between cohomogeneity one manifolds
for harmonicity.
Urakawa \cite{MR1214054} pioneered the research of equivariant harmonic maps between cohomogeneity one manifolds, which has been widely extended by P\"uttmann and Siffert in \cite{MR4000241}.
Using this Ansatz, examples of harmonic maps between cohomogeneity one manifolds have been produced for specific choices of these cohomogeneity one  manifolds, see e.g. \cite{MR1436833, MR4000241, MR3427685, MR3745872, MR1214054}.

Recall that a manifold $M$ is termed a cohomogeneity one manifold if it admits a smooth isometric action of a compact Lie group $G$, represented as $G\times M\rightarrow M$, for which the orbit space $M/G$ is one-dimensional. Orbits can be classified into two categories: Regular and non-regular orbits.
Regular orbits, known as principal orbits, have a codimension of one. Their union forms an open dense subset $M_0$ within $M$. A singular orbit is defined as a non-regular orbit with a codimension that exceeds one.
Boundary points of $M/G$ are associated with non-regular orbits; specifically, the non-regular orbits represent the fibres of these boundary points under the natural projection $\pi: M\rightarrow M/G$.
Also, remember that equivariant maps are those that preserve the structure of cohomogeneity one manifolds, meaning they map orbits to orbits.

For equivariant maps $\varphi$ between cohomogeneity one manifolds
equation (\ref{tension}) simplifies to a singular boundary value problem. More precisely one obtains an ordinary differential equation of second order for a smooth function $r:(M/G)^{\circ}\rightarrow\mathbb{R}$ of the form 
\begin{align}
\label{ode0}
 \ddot r(t)+h_1(t)\dot r(t)+h_2(t,r(t))=0,   
\end{align}
where $h_1:(M/G)^{\circ}\rightarrow\mathbb{R},\, h_2:(M/G)^{\circ}\times\mathbb{R}\rightarrow\mathbb{R}$ are smooth maps and  
$(M/G)^{\circ}$ denotes the interior of $M/G$.
For $\varphi$ to be harmonic, $r$ must meet certain boundary conditions at the boundary points of $M/G$.

At the boundary points of $M/G$ that correspond to singular orbits, the differential equation (\ref{ode0}) exhibits singular behavior. The differential equation (\ref{ode0}) is regular at points of $M/G$ where the fibers under the natural projection $\pi:M\rightarrow M/G$ are not singular orbits.
In summary, constructing equivariant harmonic maps between cohomogeneity one manifolds requires solutions to (\ref{ode0}) that extend smoothly to the boundary points of $M/G$.
We will address the corresponding initial value problem in this manuscript, assuming that $0\in M/G$ is a boundary point. We will demonstrate that for every $v$, there is a unique smooth solution $r:(M/G)^{\circ}\cup\{0\}\rightarrow\mathbb{R}$ of (\ref{ode0}) such that $r(0)=0$ and $\dot r(0)=v$, which has a continuous dependence on $v$. 
As this is a local problem, it suffices to investigate it in the vicinity of just one singular orbit.

\begin{Thm}
Let a cohomogeneity one manifold $M$ with a singular orbit $N$ be given. Assume that $N$ is the 
fibre of $0\in M/G$ under the natural projection map $\pi:M\rightarrow M/G$.
For each $v\in\mathbb{R}$ there
exists a unique smooth solution $r:(M/G)^{\circ}\cup\{0\}\rightarrow\mathbb{R}$ of (\ref{ode0}) with 
$r(0)=0$ and $\dot r(0)=v$
which depends continuously on $v$. 
\end{Thm}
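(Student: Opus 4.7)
The plan is to recast the singular second-order ODE (\ref{ode0}) as a first-order system of \emph{regular-singular} type at $t = 0$ and then apply the existence theorem for such systems that the abstract announces will be developed in this manuscript. Setting $y_1 = r$ and $y_2 = \dot{r}$, (\ref{ode0}) becomes
\begin{align*}
\dot{y}_1 = y_2, \qquad \dot{y}_2 = -h_1(t)\, y_2 - h_2(t, y_1),
\end{align*}
which is singular at $t=0$ because $h_1$ has a first-order pole there with residue determined by the codimension and isotropy data of the singular orbit $N$. Multiplying through by $t$ puts the system in the standard regular-singular form $t\dot{y} = A(t)\,y + t\,F(t,y)$ with $A$ and $F$ smooth up to $t=0$.

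The initial condition $\dot{r}(0) = v$ is handled by subtracting an affine ansatz of the form $(vt, v)$ (or a higher-order correction if needed) so that the reduced unknown vanishes at $t=0$; the ODE it satisfies is again of regular-singular type but with an inhomogeneity depending smoothly on the parameter $v$. This is precisely the setting in which the general regular-singular theorem should apply, provided the residue matrix $A(0)$ has no positive-integer eigenvalues clashing with the Taylor coefficients of a smooth solution. The latter is the geometric input: the eigenvalues of $A(0)$ are read off from the leading behavior of $h_1$ and encode the dimension of the normal slice to $N$, producing exactly one free parameter in the space of smooth solutions vanishing at $t = 0$, which is matched to $v$.

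The main obstacle will be the passage from the existence of a merely continuous solution (straightforward via a contraction in a weighted sup-norm ball of functions vanishing at $t=0$) to smoothness up to the boundary $t=0$. I would proceed inductively: each additional derivative of the solution satisfies a new regular-singular system whose residue matrix is a shift of the original one, and non-resonance must be re-verified at every order. Once smoothness is established, uniqueness and continuous dependence on $v$ follow at once from the same fixed-point construction, since $v$ enters the contraction only as a smooth parameter of the inhomogeneity.
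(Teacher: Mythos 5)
Your overall strategy --- recast (\ref{ode0}) as a first-order regular-singular system at $t=0$, normalize the initial data away, and invoke a singular initial value theorem subject to a non-resonance condition on the residue matrix --- is the same as the paper's. But there is a concrete gap in the reduction step. Near the singular orbit the smoothness conditions for the cohomogeneity one metric (the expansions ${P_t}_{\lvert\mathfrak{p}}=t^2\Id_{\mathfrak{p}}+t^4B(t)$, ${P_t}_{\lvert\mathfrak{m}}=A_0+tA_1+t^2A(t)$, etc.) force the equation to take the form
\begin{align*}
\ddot r(t)+p\,\frac{\dot r(t)}{t}-p\,\frac{r(t)}{t^2}+f_5(t,r(t),\dot r(t))=0,\qquad p=\dim\mathfrak{p},
\end{align*}
with $f_5$ smooth up to $t=0$. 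So it is not only $h_1$ that is singular: $h_2(t,r)$ carries a \emph{second-order} pole acting on $r$. With your variables $(y_1,y_2)=(r,\dot r)$, multiplying by $t$ leaves the term $p\,y_1/t$ in the second equation, so the system is \emph{not} of the form $t\dot y=A(t)y+tF(t,y)$ with $A$ and $F$ smooth, and subtracting the affine ansatz $(vt,v)$ does not repair this: writing $r=vt+\rho$ one still faces $p\,\rho/t^2$, and $\rho$ is not known a priori to vanish to second order --- that is part of what must be proved. The missing idea is a multiplicative change of dependent variable dividing out the smooth indicial solution $t$ of $\ddot r+p\dot r/t-pr/t^2=0$: the paper sets $r(t)=c_0\,t\,a(t)$, obtains $a''+\tfrac{p+2}{t}a'=f(t,a,a')$, and only then passes to a first-order system, whose residue matrix $\operatorname{diag}(0,-(p+2))$ satisfies the hypotheses of Theorem\,\ref{thm:Singular:IVP} (equivalently one could take $(r/t,\dot r)$ as unknowns). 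Without this step there is no well-defined smooth residue matrix to which your non-resonance discussion can refer.

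Two further remarks. First, identifying the singular parts ($p/t$ in $h_1$ and $-p\,r/t^2$ in $h_2$) is not a side remark but the bulk of the paper's argument: it requires the expansions of $P_t$, $\dot P_t$ and $P_t^{-1}$ on $\mathfrak{p}$ and $\mathfrak{m}$, and in the general case (no splitting of $S^2(\mathfrak{n})^H$) a Schur-complement computation to invert the block matrix $P_t$. Your proposal treats this as input data read off from ``codimension and isotropy data,'' which is where the actual geometric content lies. Second, for the analytic core the paper cites Theorem\,\ref{thm:Singular:IVP} as a black box, which delivers existence, uniqueness, smoothness and continuous dependence on the initial value in one stroke; your plan to reprove it via a weighted contraction plus an order-by-order smoothness bootstrap with non-resonance re-verified at each order is a workable but unnecessary detour once the system is in the correct normal form.
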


This result is formulated precisely in Section\,\ref{sec-ivp}. The initial value problem for harmonic maps of cohomogeneity one manifolds has been solved in the literature for specific cases, as seen in \cite{MR1298998, MR2022387,MR3427685, MR3745872}.

\smallskip

Regular-singular systems of first order are examined in Section\,\ref{sec4}.
Such systems arise naturally when dealing with the initial value problem for harmonic maps between cohomogeneity one manifolds.
Many of the results shown in Section\,\ref{sec4} are already known, but they can be found scattered throughout the literature. We present an overview that we hope will be beneficial for future research.

\medskip

\textbf{Organization}: 
In Section\,\ref{sec:prelim} we provide preliminaries.
We provide the tension field for equavariant smooth maps between cohomogeneity one manifolds in Section\,\ref{sec-har}.
The initial value problem
for harmonic maps of cohomogeneity one manifolds is set up and solved in Section\,\ref{sec-ivp}.
Finally, in Section\,\ref{sec4}, we study
regular-singular systems of first order.

\section{Preliminaries}
\label{sec:prelim}
We provide preliminaries on cohomogeneity one manifolds, on smoothness conditions for cohomogeneity one metrics and on harmonic self-maps between Riemannian manifolds.

\subsection{Cohomogeneity one manifolds}
In this subsection we briefly recall cohomogeneity one manifolds.
This subsection adheres closely to the presentation in \cite{MR4000241}, with a few sentences copied nearly verbatim.

\medskip
Let $M$ be a connected manifold throughout.
$M$ is said to have cohomogeneity one if it admits a smooth isometric action of a compact Lie group $G$, represented as $G\times M\rightarrow M$, for which the orbit space $M/G$ is one-dimensional. 
In this case, $M$ is also called a cohomogeneity one manifold.

Mostert \cite{MR95897,MR85460} proved a classification result of cohomogeneity one manifolds. Specifically, he established that the orbit space $M/G$ of these manifolds is isometric to precisely one of the following spaces:
\begin{enumerate}
    \item the real line, i.e. $M/G=\mathbb{R}$;
    \item the circle, i.e. $M/G=S^1$;
    \item a closed interval, i.e. $M/G=[0,L]$ for some $L\in\R_+$; 
    \item a half line, i.e. $M/G=[0,\infty)$.
\end{enumerate}
We denote by $\partial(M/G)$ the boundary points of the orbit space $M/G$, i.e. $\partial(\mathbb{R})=\emptyset$, $\partial(S^1)=\emptyset$, $\partial([0,L])=\{0,L\}$ and $\partial([0,\infty))=\{0\}$.

Let $M$ be a cohomogeneity one manifold.
Every boundary point, meaning every element of $\partial(M/G)$, is associated with a non-principal orbit $N$.
In scenario (3), there are precisely two non-principal orbits, which we refer to as $N_0$ and $N_1$.
In scenario (4), there exists precisely one non-principal orbit, which we refer to as $N_0$.
 Moreover, every interior point of $M/G$ is associated with a principal orbit.
An orbit $N$ that is not principal is termed an exceptional orbit, or just exceptional, if its dimension matches the common dimension of the principal orbits.
In other cases, we refer to $N$ as a singular orbit, or just singular.
The set $M_0$, which consists of all principal orbits, is referred to as the regular part of $M$. 

\smallskip

Let $\gamma$ be a unit-speed normal geodesic, this means a geodesic $\gamma:\R\to M$ that passes through all orbits perpendicularly and satisfies $\gamma(0)\in N_0$ and $\gamma(L) \in N_1$ in case (3), and $\gamma(0)\in N_0$ in case (4).
The isotropy groups of the regular points $\gamma(t)$, $t\in (M/G)^{\circ}$, are constant. We will denote this common principal isotropy group by $H$ from now on. 
In our context, normal geodesics of cohomogeneity one manifolds are significant because they allow for the reduction of geometric problems on these manifolds to an examination of these issues along a specified normal geodesic.

In case (3), we make the additional assumption that $\gamma$ is closed, which is equivalent to the Weyl group $W$ being finite.
Recall that the Weyl group $W$ is defined as the subgroup of elements of $G$ that leave $\gamma$ invariant modulo the subgroup of elements that fix $\gamma$ pointwise. It is a dihedral subgroup of $N(H)/H$, generated by two involutions that fix $\gamma(0)$ and $\gamma(L)$, respectively, and it acts simply transitively on the regular segments of the normal geodesic.

\smallskip

In this manuscript we investigate the local structure of a cohomogeneity one manifold in the vicinity of a singular orbit $N$. Let $\gamma$ be a normal geodesic with unit speed, and let $x_0:=\gamma(0)\in N$. Let $K$ be the isotropy group at $x_0$. Therefore, $N$ is equal to $G\cdot x_0$ and can be identified with $G/K$.

\smallskip

The Riemannian metric \( g \) on \( M \) is characterized by a one-parameter family of \( G \)-invariant metrics \( g_t \) on the principal orbits \( G/H \). Specifically, we have \begin{align} \label{metric} g=g_t+dt^2, \end{align} where \( g_t \) must meet invariance conditions in case (2) and smoothness conditions in cases (3) and (4), as detailed in \cite{MR1923478} and its references. The next subsection addresses the smoothness conditions for cases (3) and (4).
Keep in mind that, due to the density of $M_0$ in $M$, (\ref{metric}) also defines the metric on all of $M$.

\subsection{Smoothness conditions of cohomogeneity one metrics}
Let $M$ be a manifold of cohomogeneity one that has (at least one) singular orbit. Let us further assume that the set $M_0\subset M$, which is both dense and open in $M$, possesses a $G$-invariant metric.
The studies in \cite{MR1758585} and \cite{MR4400726} examined the conditions for the extension of this metric to the singular orbit to be smooth, and they provided smoothness criteria for cohomogeneity one metrics. 
 One can often apply the smoothness conditions described by Verdiani and Ziller in \cite{MR4400726} more easily in practice than those in \cite{VZ1}.  The subsequent presentation adheres closely to \cite{MR4400726}.

\smallskip

A non-compact cohomogeneity one manifold with a singular orbit is a vector bundle, whereas a compact cohomogeneity one manifold is the union of two vector bundles, as discussed in \cite{MR1155662,MR3362465} and their references. 
This manuscript will focus on the local structure of a cohomogeneity one manifold close to a singular orbit.
Thus, we can limit ourselves to a single vector bundle.

\smallskip

Let $G$ be a compact Lie group, and let $H$ and $K$, with $G\subset G$, be closed Lie subgroups such that $H\subset K\subset G$ and $K/H=\s^{\ell}$ for some $\ell\in\mathbb{N}=\{1,2,3,\dots\}$.
To put it differently, $K$ acts transitively on $\s^{\ell}$, meaning there is a representation $\rho:K\rightarrow O(\ell+1)$ for which $\rho(K) x=\s^{\ell}$ for every $x\in\s^{\ell}$.
The isotropy group $K$ acts on the sphere $\s^{\ell}$, and this action can be extended to a linear action on the disk $D:=D^{\ell+1}\subset\mathbb{R}^{\ell+1}$.

Consequently, $M=G\times_KD$ represents a homogeneous disc bundle, where the base space is $G/K$ and the structure group is $H$. In this case, the isotropy group $K$ acts on $G\times D$ in the following manner: 
\begin{align*} 
(k, (g,d))\mapsto (gk^{-1}, \rho(k)d).   
\end{align*}
Note further that $G$ acts with cohomogeneity one on $M$ by left multiplication, i.e.
$$(g,[\hat{g},d])\mapsto[g\hat{g},d]$$ for $d\in D$ and $g, \hat{g}\in G$.
The singular orbit $N:=G\times_K\{0\}$ is diffeomorphic to $G/K$.
For sufficiently small $t$, the principal orbits are represented by $P_t:=G\times_K\s^{\ell}_t$. Here, $\s^{\ell}_t\subset D$ denotes a sphere with radius $t$. Every $P_t$ is a diffeomorphic copy of $G/H$.
By means of the exponential map, one can identify the disk $D$ with a manifold that is orthogonal to the singular orbit at the point $\gamma(0)$ in a G-equivariant way. 
Thus $D$ can be considered as slice of the $G$ action on $M$.

\smallskip

To state the main result of \cite{MR4400726}, let $\gamma\colon[0,\infty)\to \mathbb{R}^{\ell+1}$ be a normal geodesic. 
The metric on $M_0$ is determined by its values along $\gamma$.
This is a consequence of the fact that the values and the action of $G$ determine the metric on all of $M_0$.
Let $\mathfrak{g}$ and $\mathfrak{h}$ denote the Lie algebras of $G$ and $H$, respectively. 
Let $Q$ denote a fixed biinvariant metric on the group $G$. Moreover, let $\mathfrak{n}$ denote the orthonormal complement of the Lie algebra $\ah$ within $\ag$.
The complement $\mathfrak{n}$ can be recognized as the tangent space to the regular orbits along $\gamma$ via action fields, specifically $X\in\mathfrak{n} \mapsto X^{\ast}(\gamma(t))$.

\smallskip

Let $X_i$ denote a basis of $\mathfrak{n}$. Then for each $t>0$, $X^*_i(\gamma(t))$ constitutes a basis of $\dot \gamma^\perp(t)\subset T_{\gamma(t)}M$.
The collection of the $r$ functions $g_{ij}(t)=g(X_i^*,X_j^*)_{c(t)},\ i\le j$ also determines the metric.

With this preparation at hand, we can refer to the following theorem that offers smoothness conditions for $g$:

\begin{Thm}[\cite{MR4400726}]
\label{gsmooth}
Let $g_{ij}(t),\ t>0$ be a smooth family of positive definite matrices describing the cohomogeneity one metric on the regular part along a normal geodesic $\gamma(t)$.     Then there exist integers $a_{ij}^k$ and $ d_k$, with $\ d_k\ge 0$,  such that the metric has a smooth extension to all of $M$ if and only if
$$\sum_{i,j} a_{ij}^k\,g_{ij}(t)=t^{d_k}\phi_k(t^2)\quad \text{ for } k=1,\cdots, r, \ \text{ and } t>0 $$
 where $\phi_1,\cdots,\phi_{r}$ are smooth functions defined for $t\ge 0$.
\end{Thm}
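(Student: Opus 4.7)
My plan is to reduce smoothness of $g$ on $M$ to the smoothness of a $K$-invariant symmetric $2$-tensor on the slice $D \subset \mathbb{R}^{\ell+1}$, and then to expand that tensor into $K$-isotypic components along rays through the origin. The description of each isotypic component in polar coordinates produces the claimed powers $t^{d_k}$ and the smooth remainders $\phi_k(t^2)$.

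First, by the slice theorem a $G$-invariant tubular neighbourhood of the singular orbit $N$ is $G$-equivariantly diffeomorphic to the associated bundle $G\times_K D$, and the metric $g$ on this neighbourhood is smooth if and only if its restriction to the slice $\{e\}\times D$ is a smooth, $K$-invariant symmetric $2$-tensor on $D$ valued in $(\mathbb{R}\,\dot\gamma\oplus\mathfrak{n})^*\otimes_{\mathrm{sym}}(\mathbb{R}\,\dot\gamma\oplus\mathfrak{n})^*$. Along the normal geodesic $\gamma$ this tensor is precisely encoded by the matrix $g_{ij}(t)$ (together with the already-normal factor $dt^2$ and the vanishing of the cross terms $g(\dot\gamma,X_i^{*})$ forced by $K$-invariance). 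Thus I only need to characterize which smooth families $g_{ij}(t)$ arise as the restriction of a smooth $K$-equivariant tensor on $D$.

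Second, I decompose the representation $\mathrm{Sym}^2(\mathfrak{n}^*)$ (together with any mixed blocks coming from $T^*D$) into $K$-isotypic components $V_1,\ldots,V_r$, choosing in each $V_k$ a basis-free invariant linear functional $\sum_{i\le j} a_{ij}^{k}\,(X_i^\flat\cdot X_j^\flat)$; evaluating this functional on the metric gives precisely the linear combination $\sum_{ij} a_{ij}^{k} g_{ij}(t)$ appearing in the statement. Because the evaluation is $K$-equivariant, what we need is the smoothness-on-$D$ characterization of a function lying in the $V_k$-isotypic component. Here I invoke the key representation-theoretic lemma: if $K$ acts orthogonally on $\mathbb{R}^{\ell+1}$ and $f:\mathbb{R}^{\ell+1}\to W$ is a smooth $K$-equivariant map into a representation in which the spherical harmonics $H^{d_k}$ on $S^{\ell}$ appear with minimal harmonic degree $d_k$, then along a ray $t\mapsto t\theta_0$ one has $f(t\theta_0)=t^{d_k}\phi_k(t^2)\cdot Y(\theta_0)$ for a smooth $\phi_k$ on $[0,\infty)$. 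This is proved by Taylor-expanding $f$ at the origin, noting that the homogeneous polynomial of degree $j$ decomposes into harmonic pieces of degrees congruent to $j \pmod 2$, and regrouping the powers $t^j$ according to the harmonic degree of the piece; the parity constraint forces a factor of $t^{d_k}$ times an even series in $t$, i.e.\ a smooth function of $t^2$.

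Third, combining the two steps completes the proof in both directions: the \emph{necessity} follows by restricting a smooth metric to $\gamma$, projecting onto each isotypic component, and applying the lemma to read off $t^{d_k}\phi_k(t^2)$; the \emph{sufficiency} is obtained by writing a $K$-invariant tensor on $D$ as the finite sum over $k$ of the $V_k$-isotypic components determined by the functions $\phi_k(|x|^2)|x|^{d_k}$ and checking that each summand extends smoothly across $0$ thanks to the lemma. The main technical obstacle is bookkeeping on the representation-theoretic side: identifying for each irreducible $K$-summand its minimal harmonic degree $d_k$ in the symmetric algebra of $\mathbb{R}^{\ell+1}$, selecting invariant functionals with integer coefficients $a_{ij}^k$, and making sure that after quotienting by $H$ only the $H$-invariant part of each isotypic component survives, so that the $r$ equations of the theorem really do suffice to detect smoothness. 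Everything analytic then reduces to the Taylor-expansion lemma above.
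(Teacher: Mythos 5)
The paper does not prove Theorem~\ref{gsmooth}: it is quoted, attribution and all, from Verdiani--Ziller \cite{MR4400726}, so there is no internal proof to compare you against. Your sketch is in outline the strategy of that reference (and of the earlier \cite{MR1758585}): identify a tube around $N$ with $G\times_K D$, reduce smoothness of $g$ to smoothness of a $K$-equivariant symmetric $2$-tensor on the slice $D\subset\mathbb{R}^{\ell+1}$, decompose into $K$-isotypic pieces, and read off the radial behaviour from the harmonic decomposition of homogeneous polynomials. In the sufficiency direction the clean mechanism is the one you gesture at: $t^{d}\phi(t^2)$ times a degree-$d$ spherical harmonic is $\phi(\lvert x\rvert^2)$ times a degree-$d$ harmonic polynomial, which is manifestly smooth across the origin.

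Two points in your write-up are genuine gaps rather than bookkeeping. First, a fixed irreducible $K$-type can occur in the harmonic polynomials $H^{d}(\mathbb{R}^{\ell+1})$ for many degrees $d$ and can occur with multiplicity in $\mathrm{Sym}^2(\mathbb{R}\dot\gamma\oplus\mathfrak{n})^{*}$, so a generic invariant functional applied to $g$ yields a sum $\sum_i t^{d_i}\phi_i(t^2)$, not a single term $t^{d_k}\phi_k(t^2)$. The substance of the theorem is that the integer functionals $a^k_{ij}$ can be chosen (adapting a basis of each multiplicity space to the lowest occurring harmonic degrees) so that each combination is governed by a single exponent $d_k$; your proposal asserts that such a choice exists but does not explain how to make it, and that is where most of the work in \cite{MR4400726} lies. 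Second, your passage from ``the Taylor series at $0$ is $t^{d_k}$ times an even series'' to ``$=t^{d_k}\phi_k(t^2)$ with $\phi_k$ smooth on $[0,\infty)$'' is not a formal regrouping: the Taylor series need not converge, and one must invoke Whitney's theorem on even (and odd) smooth functions to absorb the flat remainder. Relatedly, in the sufficiency direction you should justify why checking the conditions along the single geodesic $\gamma$ suffices; this uses that $K$ acts transitively on the unit sphere of the slice, so the equivariant tensor on all of $D$ is determined by its restriction to the ray $t\mapsto t\dot\gamma(0)$. With these two points supplied, your argument is the standard one.
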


Remember that $Q$ represents a predetermined biinvariant metric on $G$.
Regarding $Q$, we examine the subsequent splitting of the Lie algebra $\mathfrak{g}$: \begin{align*} \mathfrak{g}=\mathfrak{h}\oplus\mathfrak{p}\oplus\mathfrak{m}. \end{align*}
Here, $\mathfrak{h}$ denotes the Lie algebra of $H$, while $\mathfrak{h}\oplus\mathfrak{p}$ represents the Lie algebra of $K$.

Theorem\,\ref{gsmooth} in particular yields
\begin{align*}
&g_{\lvert\mathfrak{m}}=A_0+tA_1+t^2A(t),\\
&g_{\lvert\mathfrak{p}}=t^2\Id_{\mathfrak{p}}+t^4B(t),\\
&g_{\lvert\mathfrak{pm}}=t^2C_0+t^3C(t),\\
\end{align*}
where $A_0$, $A_1$ and $C_0$ are constant matrices and $A, B, C$ are smooth matrix valued functions of $t$, see \cite{VZ1}.

\subsection{Harmonic self-maps between cohomogeneity one manifolds}
\label{sub-har}
We present a brief exposition on equivariant harmonic self-maps between cohomogeneity one manifolds. We again borrow our notation and presentation style from \cite{MR4000241}. 

\medskip

Equivariant maps are the structure preserving maps between cohomogeneity one manifolds, i.e. these maps map orbits to orbits.
 In case (3), P\"uttmann \cite{MR2480860} constructed an infinite family of equivariant smooth self-maps of $M$ given by
\begin{align*}
    \psi: M\rightarrow M,\,  g\cdot\gamma(t)\mapsto g\cdot\gamma(kt).
\end{align*}
 Here $\gamma$ is a unit speed normal geodesic such that $\gamma(0)\in N_0$. Furthermore, $k\in\mathbb{Z}$ is of the form $j\abs{W}/2+1$ where $j\in 2\mathbb{Z}$.
Odd integers $j$ may also be permitted under certain conditions on the action $G\times M\rightarrow M$, as detailed in Lemma\,2.1 of \cite{MR2480860}. In accordance with \cite{MR4000241}, the mapping $g\cdot\gamma(t) \mapsto g\cdot\gamma(kt)$ is referred to as the {\em $k$-map} of $M$.
 When the codimensions of the non-principal orbits $N_0$ and $N_1$ are odd, a $k$-map has a degree that is equal to $k$.
  If the codimensions of the non-principal orbits $N_0$ and $N_1$ are not both odd, then the degree of a $k$-map is either 0 or $\pm 1$.

\smallskip

Let $M$ be an arbitrary cohomogeneity one manifold, without the assumption that we are necessarily dealing with case (3), and let $\gamma$ be a fixed but arbitrary normal geodesic. 

Below we study the equivariant self-maps of $M$ given by
\begin{align}
\label{map_psi}
\psi:M\rightarrow M,\, g\cdot\gamma(t)\mapsto g\cdot\gamma(r(t)).
\end{align}
Here, $r:(M/G)^{\circ}\rightarrow \mathbb{R}$ is a smooth function that also meets the boundary conditions in cases (3) and (4). This means that $r$ meets the following requirements: 
\begin{enumerate}
    \item[(i)] the boundary conditions 
$r(0)=0$ and $r(L)=kL$, in case $(3)$. Here $k=j\abs{W}/2+1$ where $j\in 2\mathbb{Z}$ (and possibly $j\in\mathbb{Z}$, see above);
\item[(ii)] the boundary condition 
$r(0)=0$ in case $(4)$.
\end{enumerate}
Analogous to Lemma\,2.1 in \cite{MR2480860}, one proves that in case (4) the maps (\ref{map_psi}) are smooth.
Consequently, the maps (\ref{map_psi}) are smooth in every instance (1)-(4).

\smallskip

P\"uttmann and Siffert \cite{MR4000241} examined the maps (\ref{map_psi}) for harmonicity in case (3). To this end, the tension field of $\psi$ was computed in case $(3)$, as detailed in Theorems A and B of \cite{MR4000241}. 
The proofs of Theorems A and B involve local calculations.
Therefore, we can perform the same calculations for cases (1), (2), and (3).

In order to state these results, we introduce the following notation, which was also used in \cite{MR4000241}. By
\begin{gather*}
  \Pi_t^{r(t)}: T_{\gamma(t)} (G\cdot\gamma(t)) \to T_{\gamma(r(t))} (G\cdot \gamma(r(t))
\end{gather*}
we denote the parallel transport along the normal geodesic~$\gamma$. 
The action field homomorphism, i.e. the map given by $X^{\ast}_{\vert\gamma(t)} \mapsto X^{\ast}_{\vert \gamma(r(t))}$,
constitutes a homomorphism between the two tangent spaces $T_{\gamma(t)} (G\cdot\gamma(t))$ and $T_{\gamma(r(t))} (G\cdot\gamma(r(t)))$. 
We denote by $J_t^{r(t)}$ the endomorphism of $T_{\gamma(t)} (G\cdot\gamma(t))$ obtained by composing the action field homomorphism with $(\Pi_t^{r(t)})^{-1} = \Pi_{r(t)}^t$.

\smallskip

We get the following result for the normal component of the tension field of the maps $\psi$ given in (\ref{map_psi}) (compare Theorem\,A in \cite{MR4000241}):

\begin{Thm}
\label{A}
The normal component of the tension field of $\psi$, see (\ref{map_psi}), is given by
\begin{gather*}
  \tau^{\nor}_{\vert\gamma(t)} = \ddot r(t) - \dot r(t) \tr S_{\vert \gamma(t)}
    + \tr\, (J_t^{r(t)})^{\ast} (\Pi_t^{r(t)})^{-1} S_{\vert \gamma(r(t))} \Pi_t^{r(t)} J_t^{r(t)}
\end{gather*}
for $t\in(M/G)^{\circ}$. Here, $S_{\vert \gamma(t)}$ denotes the shape operator of the orbit $G\cdot \gamma(t)$ at $\gamma(t)$ and $(J_t^{r(t)})^{\ast}$ denotes the adjoint endomorphism of $J_t^{r(t)}$.
\end{Thm}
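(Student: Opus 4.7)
Plan. Since $\psi$ is $G$-equivariant, so is $\tau(\psi)$, and it suffices to evaluate $\langle\tau(\psi),\dot\gamma(r(t))\rangle$ at a single regular point $\gamma(t)$ of the normal geodesic. Choose an orthonormal frame $(e_i)_{i=1}^{n}$ at $\gamma(t)$ with $e_n=\dot\gamma(t)$ and $e_1,\ldots,e_{n-1}$ tangent to the principal orbit, and extend it $G$-invariantly to a tubular neighbourhood of $\gamma$. Denote by $\nu$ the $G$-invariant unit normal field to the principal orbits, so that $\nu_{\vert\gamma(t)}=\dot\gamma(t)$. As already noted in the text, the computation is local, so it is enough to mirror the argument of Theorem~A in \cite{MR4000241}; the three steps below assemble the $\nu$-component of $\tau(\psi)=\sum_i\bigl({}^{\psi}\nabla_{e_i}d\psi(e_i)-d\psi(\nabla_{e_i}e_i)\bigr)$.

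Geodesic direction. For $e_n=\dot\gamma(t)$ one has $d\psi(\dot\gamma(t))=\dot r(t)\dot\gamma(r(t))$ and $\nabla_{\dot\gamma}\dot\gamma=0$. Differentiating $\dot r(t)\dot\gamma(r(t))$ along $\gamma$ with the pullback connection, and using that $\gamma$ is also a geodesic in the target, produces $\ddot r(t)\dot\gamma(r(t))$; its contribution to $\tau^{\nor}$ is $\ddot r(t)$.

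Orbit directions. By $G$-equivariance every action field satisfies $d\psi(X^{\ast})=X^{\ast}\circ\psi$, hence $d\psi(e_i)$ is tangent to the target orbit at $\gamma(r(t))$. In particular $\langle d\psi(e_i),\nu\circ\psi\rangle\equiv 0$ along the source orbit, and metric compatibility of the pullback connection gives
\begin{align*}
\langle{}^{\psi}\nabla_{e_i}d\psi(e_i),\nu\circ\psi\rangle=-\langle d\psi(e_i),{}^{\psi}\nabla_{e_i}(\nu\circ\psi)\rangle=\langle d\psi(e_i),S_{\vert\gamma(r(t))}d\psi(e_i)\rangle,
\end{align*}
where the last equality uses $\nabla^{M}_{X}\nu=-S(X)$ for $X$ tangent to the principal orbit. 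Inserting $d\psi(e_i)=\Pi_t^{r(t)}J_t^{r(t)}e_i$ and using that $\Pi_t^{r(t)}$ is an isometry between the orbit tangent spaces, the sum $\sum_{i<n}$ becomes exactly $\tr\,(J_t^{r(t)})^{\ast}(\Pi_t^{r(t)})^{-1}S_{\vert\gamma(r(t))}\Pi_t^{r(t)}J_t^{r(t)}$. Separately, $d\psi$ maps the $\nu$-component of $\nabla_{e_i}e_i$ to $\dot r(t)\dot\gamma(r(t))$ and its orbit-tangent component into orbit tangents at $\gamma(r(t))$; summing over $i<n$ one obtains $\sum_i\langle d\psi(\nabla_{e_i}e_i),\nu\circ\psi\rangle=\dot r(t)\tr S_{\vert\gamma(t)}$.

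Assembly and main obstacle. Combining the three pieces with the signs coming from the tension-field formula reproduces the displayed identity. The technical heart is the step $\langle{}^{\psi}\nabla_{e_i}d\psi(e_i),\nu\circ\psi\rangle=\langle d\psi(e_i),S_{\vert\gamma(r(t))}d\psi(e_i)\rangle$, which forces the shape operator to appear at the \emph{image} point $\gamma(r(t))$ while the orthonormal frame still lives at the source $\gamma(t)$. Transporting this trace back to the source is exactly what the factorisation $d\psi|_{T(\text{orbit})}=\Pi_t^{r(t)}J_t^{r(t)}$ accomplishes, and this is the reason why the two-variable endomorphism $J_t^{r(t)}$, together with parallel transport along $\gamma$, appears in the final expression.
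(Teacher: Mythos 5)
Your argument is correct and follows essentially the same route as the paper: the paper defers Theorem~\ref{A} to the local computation of Theorem~A in \cite{MR4000241} and reproduces that computation in the proof of the more general Theorem~\ref{normalone}, where the same three contributions ($\ddot r$ from the geodesic direction, $-\dot r\tr S_{\vert\gamma(t)}$ from $d\psi\cdot\nabla_{e_\mu}e_\mu$, and the trace of $(J_t^{r(t)})^{\ast}(\Pi_t^{r(t)})^{-1}S_{\vert\gamma(r(t))}\Pi_t^{r(t)}J_t^{r(t)}$ from $\nabla_{e_\mu}(d\psi\cdot e_\mu)$ via the shape-operator identity and the factorisation of the action-field homomorphism) are assembled in the same way. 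Your derivation of the shape-operator step from metric compatibility of the pullback connection is just an unwinding of the identity $\langle\nabla_{X^{\ast}}Y^{\ast},T\rangle=\langle X^{\ast},S\cdot Y^{\ast}\rangle$ that the paper invokes directly.
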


\smallskip

Keep in mind that $Q$ represents a predetermined biinvariant metric on $G$. Let $\mathfrak{n}$ represent the orthonormal complement of the Lie algebra $\ah$ of the principal isotropy group $H$ within~$\ag$.
We define the metric endomorphisms $P_t : \mathfrak{n} \to \mathfrak{n}$ by \begin{gather*} Q(X, P_t\cdot Y) = g(X^{\ast},Y^{\ast})_{\gamma(t)}. \end{gather*}
The normal component of the tension field can be expressed as follows using this notation:

\begin{Thm}[Theorem 3.4 in \cite{MR4000241}]
\label{tau-nor}
The normal component of the tension field is given by
\begin{gather*}
  \tau^{\nor}_{\vert\gamma(t)} = \ddot r(t) + \tfrac{1}{2}\dot r(t) \tr P_t^{-1}\dot P_t
    - \tfrac{1}{2} \tr P_t^{-1} (\dot P)_{r(t)}.
\end{gather*}
\end{Thm}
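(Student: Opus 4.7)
The plan is to reduce Theorem~\ref{A} to the stated identity by expressing the shape operator $S$, the parallel transport $\Pi_t^{r(t)}$, and the endomorphism $J_t^{r(t)}$ in terms of the metric endomorphism $P_t$. The key device is the action field isomorphism $\Phi_t\colon \mathfrak{n}\to T_{\gamma(t)}(G\cdot\gamma(t))$, $X\mapsto X^{\ast}_{\vert\gamma(t)}$, which pulls back the orbit metric to the inner product $\langle X,Y\rangle_t := Q(X,P_tY)$ on $\mathfrak{n}$. Everything in the formula of Theorem~\ref{A} is transferred to $\mathfrak{n}$ via $\Phi_t$ and $\Phi_{r(t)}$ and then rewritten in terms of $P_t$ and $P_{r(t)}$.

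First I translate the shape operator. Since each $X^{\ast}$ is Killing, a short computation using $g(\dot\gamma,Y^{\ast})\equiv 0$, the bracket relation $[Y^{\ast},X^{\ast}]\in \ker(g(\dot\gamma,\cdot))$, and the symmetry of $S$, yields the standard identity $\frac{d}{dt} g(X^{\ast},Y^{\ast})_{\gamma(t)} = -2\,g(SX^{\ast},Y^{\ast})$. Combining this with $g(X^{\ast},Y^{\ast})_{\gamma(t)}=Q(X,P_tY)$ shows that, pulled back via $\Phi_t$, the shape operator corresponds to $\tilde S_t = -\tfrac12 P_t^{-1}\dot P_t$. In particular $\tr S_{\vert\gamma(t)} = -\tfrac12 \tr P_t^{-1}\dot P_t$, which converts the middle term $-\dot r(t)\tr S_{\vert\gamma(t)}$ of Theorem~\ref{A} into $+\tfrac12 \dot r(t)\tr P_t^{-1}\dot P_t$.

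Next I set $\pi := \Phi_{r(t)}^{-1}\Pi_t^{r(t)}\Phi_t\colon\mathfrak{n}\to\mathfrak{n}$. Since parallel transport is a $g$-isometry between the two tangent spaces, $\pi$ satisfies $\pi^{T} P_{r(t)} \pi = P_t$, where the transpose is with respect to $Q$. The action field homomorphism corresponds to the identity on $\mathfrak{n}$, hence $\Phi_t^{-1} J_t^{r(t)}\Phi_t = \pi^{-1}$, and the composition $\Pi_t^{r(t)} J_t^{r(t)}$ pulls back to the identity on $\mathfrak{n}$. Pulling the entire endomorphism inside the trace of Theorem~\ref{A} back to $\mathfrak{n}$, it becomes
\begin{equation*}
(\pi^{-1})^{\ast}\,\pi^{-1}\,\tilde S_{r(t)}\,\pi,\qquad \tilde S_{r(t)} = -\tfrac12 P_{r(t)}^{-1}(\dot P)_{r(t)},
\end{equation*}
where $(\pi^{-1})^{\ast}$ denotes the adjoint with respect to $\langle\cdot,\cdot\rangle_t$.

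The only non-routine step is the computation of this adjoint. Using $P_t$-self-adjointness and the isometry relation $\pi^{T}P_{r(t)}\pi=P_t$, one finds $(\pi^{-1})^{\ast} = P_t^{-1}P_{r(t)}\pi$. Substituting, the inner $\pi$ and $\pi^{-1}$ cancel by cyclicity of the trace and what remains is $\tr(P_t^{-1}P_{r(t)}\tilde S_{r(t)}) = -\tfrac12\tr P_t^{-1}(\dot P)_{r(t)}$. Adding the three contributions $\ddot r(t)$, $+\tfrac12\dot r(t)\tr P_t^{-1}\dot P_t$, and $-\tfrac12\tr P_t^{-1}(\dot P)_{r(t)}$ recovers the formula of Theorem~\ref{tau-nor}. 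I expect the sign and transpose bookkeeping in the adjoint computation, together with verifying that parallel transport indeed translates into the isometry identity $\pi^{T}P_{r(t)}\pi = P_t$ on $\mathfrak{n}$, to be the only places demanding care; the remainder is substitution and trace manipulation.
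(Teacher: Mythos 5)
Your argument is correct and is essentially the route the paper takes: Theorem~\ref{tau-nor} is quoted from \cite{MR4000241}, and the paper's own derivation of its generalization (Theorem~\ref{normaltwo}) rests on exactly your two ingredients, namely $\langle X^{\ast},S\cdot X^{\ast}\rangle_{\vert\gamma(t)}=-\tfrac12 Q(X,\dot P_t X)$ (so $\tr S_{\vert\gamma(t)}=-\tfrac12\tr P_t^{-1}\dot P_t$) together with transferring the shape-operator term at $\gamma(r(t))$ back to $\gamma(t)$ through the action field homomorphism, which yields $-\tfrac12\tr P_t^{-1}(\dot P)_{r(t)}$. The only blemish is the stray trailing $\pi$ in your pulled-back endomorphism --- the correct pullback of $(J_t^{r(t)})^{\ast}(\Pi_t^{r(t)})^{-1}S_{\vert\gamma(r(t))}\Pi_t^{r(t)}J_t^{r(t)}$ is $(\pi^{-1})^{\ast}\pi^{-1}\tilde S_{r(t)}$, the final $\pi^{-1}$ coming from $J$ cancelling the $\pi$ from the conjugated shape operator --- but since your adjoint formula $(\pi^{-1})^{\ast}=P_t^{-1}P_{r(t)}\pi$ then gives $P_t^{-1}P_{r(t)}\tilde S_{r(t)}=-\tfrac12 P_t^{-1}(\dot P)_{r(t)}$ directly, the conclusion is unaffected.
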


We obtain the following result for the tangential component of the tension field of $\psi$ (compare Theorem\,B in \cite{MR4000241}):

\begin{Thm}
\label{thm-b}
The tangential component of the tension field of a $\psi$ is given by
\begin{gather*}
  \tau^{\tan}_{\vert\gamma(t)} = -\sum_{\mu,\nu=1}^n \langle [E_{\mu},F_{\nu}]^{\ast},E_{\mu}^{\ast} \rangle_{\vert\gamma(r(t))} F^{\ast}_{\nu\vert\gamma(r(t))}
\end{gather*}
for $t\in(M/G)^{\circ}$. Here, $E_1,\ldots,E_n \in \mathfrak{n}$ and $F_1,\ldots,F_n \in \mathfrak{n}$ are such that $E^{\ast}_{1\vert\gamma(t)}$,\ldots, $E^{\ast}_{n\vert\gamma(t)}$ form an orthonormal basis of $T_{\gamma(t)}(G\cdot\gamma(t))$ and $F^{\ast}_{1\vert\gamma(r(t))},\ldots, F^{\ast}_{n\vert\gamma(r(t))}$ form an orthonormal basis of $T_{\gamma(r(t))}(G\cdot\gamma(r(t)))$.
\end{Thm}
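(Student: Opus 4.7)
The plan is to compute $\tau(\psi)$ directly in a frame adapted to the orbit structure and then extract its tangential component, in complete parallel with the derivation of Theorem \ref{A} but projecting onto $T_{\gamma(r(t))}(G\cdot\gamma(r(t)))$ instead of the unit normal.

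First, I would pick the orthonormal frame $\dot\gamma(t),\,E_1^{\ast}|_{\gamma(t)},\ldots,E_n^{\ast}|_{\gamma(t)}$ of $T_{\gamma(t)}M$ consisting of the normal geodesic direction together with action fields coming from vectors $E_i\in\mathfrak{n}$. By $G$-equivariance of $\psi$, $d\psi(X^{\ast}|_{\gamma(t)})=X^{\ast}|_{\gamma(r(t))}$ for every $X\in\mathfrak{n}$, and $d\psi(\dot\gamma(t))=\dot r(t)\,\dot\gamma(r(t))$. Substituting into the defining sum for $\tau(\psi)$, the $\dot\gamma$-slot produces the purely normal term $\ddot r(t)\,\dot\gamma(r(t))$ (already handled in Theorem \ref{A}), so it contributes nothing to $\tau^{\tan}$. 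What remains is the tangential projection of
\[
\sum_{i=1}^n\bigl({}^{\psi}\nabla_{E_i^{\ast}}(d\psi\,E_i^{\ast})-d\psi(\nabla_{E_i^{\ast}}E_i^{\ast})\bigr).
\]

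Second, using $d\psi(E_i^{\ast}|_{\gamma(t)})=E_i^{\ast}|_{\gamma(r(t))}$ and the pullback-connection identity, the first summand rewrites as $\nabla^M_{E_i^{\ast}}E_i^{\ast}|_{\gamma(r(t))}$. To evaluate both summands against the $F_\nu^{\ast}|_{\gamma(r(t))}$-direction, I would apply the Koszul formula to action fields: because inner products of action fields are constant along each orbit, the derivative terms in the Koszul identity vanish, leaving $\langle\nabla_{A^{\ast}}A^{\ast},B^{\ast}\rangle=\langle[A,B]^{\ast},A^{\ast}\rangle$ for any $A,B\in\mathfrak{n}$ and any point on the orbit (with the convention $[X^{\ast},Y^{\ast}]=-[X,Y]^{\ast}$ for a left action). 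Evaluating this identity at $\gamma(r(t))$ directly and at $\gamma(t)$ before pushing through $d\psi$, then combining using the ad-invariance of $Q$ and the orthonormality of the $E_i^{\ast}|_{\gamma(t)}$, will yield the claimed formula after relabeling $i\mapsto\mu$.

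The main obstacle will be the careful bookkeeping required in that final combination step. The $E_i^{\ast}$ are orthonormal at $\gamma(t)$ but in general not at $\gamma(r(t))$, so the two Koszul expansions use different inner products on $\mathfrak{n}$, and the discrepancy has to be recast in terms of $Q$-bilinear Lie-bracket expressions via ad-invariance, so that the final expression depends only on the metric at the target point $\gamma(r(t))$. Once this cancellation is tracked correctly, the statement reduces to purely algebraic manipulations in $\mathfrak{g}$ and no further analytic input is required.
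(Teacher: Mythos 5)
Your overall strategy -- evaluate the defining sum for $\tau(\psi)$ in the frame $\dot\gamma(t),E_1^{\ast},\dots,E_n^{\ast}$, discard the purely normal $\dot\gamma$-slot, and reduce the tangential slots to the algebraic identity $\langle\nabla_{A^{\ast}}A^{\ast},B^{\ast}\rangle=\pm\langle[A,B]^{\ast},A^{\ast}\rangle$ for action fields -- is the right one (it is the computation of Theorem~B in the cited work, to which the paper defers). But the step you use to obtain that identity is wrong. For a general $G$-invariant metric the inner products of action fields are \emph{not} constant along an orbit: $\langle X^{\ast},Y^{\ast}\rangle_{g\cdot\gamma(t)}=Q(P_t\operatorname{Ad}_{g^{-1}}X,\operatorname{Ad}_{g^{-1}}Y)$ depends on $g$ unless $P_t$ is $\operatorname{Ad}_G$-equivariant (not merely $\operatorname{Ad}_H$-equivariant). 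The derivative terms in the Koszul formula are therefore not zero -- they are exactly where the metric enters -- and dropping them flips the sign: with $[X^{\ast},Y^{\ast}]=-[X,Y]^{\ast}$ your recipe yields $\langle\nabla_{A^{\ast}}A^{\ast},B^{\ast}\rangle=+\langle[A,B]^{\ast},A^{\ast}\rangle$, whereas the correct identity, obtained e.g.\ from the Killing-field relation $\langle\nabla_{A^{\ast}}A^{\ast},B^{\ast}\rangle=-\tfrac12 B^{\ast}\lvert A^{\ast}\rvert^2$ together with $B^{\ast}\lvert A^{\ast}\rvert^2=2\langle[A,B]^{\ast},A^{\ast}\rangle$, is $\langle\nabla_{A^{\ast}}A^{\ast},B^{\ast}\rangle=-\langle[A,B]^{\ast},A^{\ast}\rangle$. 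Carried through, your version produces the negative of the stated $\tau^{\tan}$ (and you can sanity-check the sign against Theorem~\ref{tanaltpart}, whose coefficient of $F_\nu^{\ast}$ is $Q([E_\mu,\hat P_{r(t)}E_\mu],F_\nu)=-\langle[E_\mu,F_\nu]^{\ast},E_\mu^{\ast}\rangle_{\gamma(r(t))}$).

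The second gap is that you never dispose of the source-orbit contribution $-\sum_\mu d\psi\bigl((\nabla_{E_\mu^{\ast}}E_\mu^{\ast})^{\tan}_{\vert\gamma(t)}\bigr)$; the "bookkeeping" you defer to the end is precisely the claim that this term vanishes identically, and that is not a formality. By the same identity as above its $E_\nu^{\ast}$-component is $\sum_\mu\langle[E_\mu,E_\nu]^{\ast},E_\mu^{\ast}\rangle_{\gamma(t)}=-Q\bigl(E_\nu,\sum_\mu[E_\mu,P_tE_\mu]\bigr)$, and one must show $\sum_\mu[E_\mu,P_tE_\mu]\perp_Q\mathfrak{n}$. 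Writing $E_\mu=P_t^{-1/2}\tilde E_\mu$ with $\tilde E_\mu$ a $Q$-orthonormal basis of $\mathfrak{n}$, one finds for every $Z$
\begin{align*}
Q\Bigl(\sum_\mu[E_\mu,P_tE_\mu],Z\Bigr)=\operatorname{Tr}_{\mathfrak{n}}\bigl(P_t^{1/2}\circ\pi_{\mathfrak{n}}\operatorname{ad}_Z\circ P_t^{-1/2}\bigr)=\operatorname{Tr}_{\mathfrak{n}}\bigl(\pi_{\mathfrak{n}}\operatorname{ad}_Z\bigr)=0,
\end{align*}
since $\operatorname{ad}_Z$ is $Q$-skew. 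This trace identity (equivalently, the harmonicity of the identity map, i.e.\ the case $r(t)=t$) is the missing ingredient; without it your argument would leave an extra term supported at $\gamma(t)$ that is absent from the statement. With these two points repaired, the rest of your outline goes through.
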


\section{Harmonic maps between cohomogeneity one manifolds}
\label{sec-har}
In this section we determine the tension field of equivariant harmonic maps between cohomogeneity one manifolds. Unlike \cite{MR4000241}, we do not assume that self-maps are necessarily considered. Nonetheless, the considerations in this section clearly align with those of \cite{MR4000241}, from which we also adopt the notation.

\medskip

Let $G$ and $K$ act on $M$ and $N$ with cohomogeneity one, respectively.
Additionally, let \begin{align*} A: G\rightarrow K \end{align*} denote a group homomorphism. Choose a unit-speed normal geodesic $\gamma$ on $M$ and a unit-speed normal geodesic $\hat{\gamma}$ on $N$.

\smallskip

We define $\psi$ as the equivariant map given by \begin{align*} \psi(g\cdot \gamma(t))=A(g)\cdot \hat{\gamma}(r(t)), \end{align*} where $r:(M/G)^{\circ}\rightarrow \mathbb{R}$ is a smooth function that also meets the boundary conditions specified in cases (3) and (4), see (i) and (ii) in Subsection\,\ref{sub-har}.

We will now calculate the tension field of $\psi$, beginning with the calculation of its normal component.
The normal directional derivative of $\psi$ is expressed as follows: \begin{gather*} d\psi_{\vert \gamma(t)} \cdot \dot \gamma(t) = \frac{d}{dt}(\psi\circ\gamma(t)) = \dot r(t) \dot{\hat{\gamma}}(r(t)). \end{gather*}

Next, we calculate the derivative of $\psi$ along tangential directions.
To achieve this, we utilize the fact that the mapping $$\ag/\ah \to T_p(G\cdot p), X\mapsto X^{\ast}_{\vert p}$$ constitutes a vector space isomorphism at regular points $p$. 
We then obtain the following identity for the derivative of $\psi$ in tangential directions
\begin{gather*}
  d\psi_{\vert\gamma(t)}\cdot X^{\ast}_{\vert \gamma(t)} = \frac{d}{ds} \psi(\exp sX \cdot \gamma(t))_{\vert s= 0}
  = \frac{d}{ds}\bigl(\exp s\, dA_{e}X \cdot \hat{\gamma}(r(t))\bigr)_{\vert s=0} = (dA_{e}X)^{\ast}_{\vert\hat{\gamma}(r(t))}.
\end{gather*}
Here, $e$ is used to denote the identity element of $G$.

\smallskip

The tension field $\tau$ of $\psi$ is defined by (compare the introduction)
\begin{gather}
   \tau_{\vert p} = \sum_{\mu=0}^n \nabla d\psi_{\vert p}(e_{\mu},e_{\mu}) = \sum_{\mu=0}^{n}\Bigl( \nabla_{e_{\mu}} (d\psi\cdot e_{\mu}) - d\psi \cdot \nabla_{e_{\mu}}e_{\mu}\Bigr)_{\vert p},
\label{tensiondef}
\end{gather}
where the vectors $e_0,\ldots,e_n$ form any orthonormal basis of $T_p M$ and can be extended arbitrarily to vector fields on a neighborhood of $p$. 
The equivariance of the tension field implies that it suffices to evaluate the expression along $\gamma(t)$. 
By $T$ we denote the unit normal field to the principal orbits of $M$ given by $T_{\vert g\cdot \gamma(t)} = g\cdot \dot \gamma(t)$.
Moreover, we set $e_0 = \dot\gamma(t)$ and choose $E_1,\ldots, E_n\in \ag$ such that $e_1 = E^{\ast}_{1\vert\gamma(t)},\ldots,e_n = E^{\ast}_{n\vert\gamma(t)}$ form an orthonormal basis of $T_{\gamma(t)}(G\cdot\gamma(t))$. 
We get
\begin{gather*}
  \nabla_{e_0}(d\psi\cdot e_0)_{\vert \gamma(t)} = \frac{\nabla}{dt} \bigl(\dot r(t) \dot{\hat{\gamma}}(r(t))\bigr) = \ddot r(t) \dot{\hat{\gamma}}(r(t))
\end{gather*}
and $(d\psi\cdot \nabla_{e_0} e_0)_{\vert \gamma(t)} = d\psi_{\vert \gamma(t)}\cdot \frac{\nabla}{dt}\dot{\hat{\gamma}}(t) = 0$. Hence, the tension field definition (\ref{tensiondef}) becomes
\begin{gather}
   \tau_{\vert \gamma(t)} = \ddot r(t) \dot{\hat{\gamma}}(r(t)) + \sum_{\mu=1}^{n}\Bigl( \nabla_{(dA_{e}E_{\mu})^{\ast}} (dA_{e}E_{\mu})^{\ast}_{\vert\hat{\gamma}(r(t))}  - d\psi_{\vert \gamma(t)} \cdot \nabla_{E^{\ast}_{\mu}}E^{\ast}_{\mu\vert \gamma(t)}\Bigr).
\label{tensionev}
\end{gather}

In order to present the normal component of the tension field in a computational convenient way, we need to introduce some notation first:
Following \cite{MR4000241}, we let
\begin{gather*}
  \Pi_t^{r(t)}: T_{\hat{\gamma}(t)} (K\cdot\hat{\gamma}(t)) \to T_{\hat{\gamma}(r(t))} (K\cdot \hat{\gamma}(r(t)))
\end{gather*}
denote the parallel transport along the normal geodesic~$\hat{\gamma}$.
Further, we denote by $J_t^{r(t)}$ the endomorphism of
$T_{\hat{\gamma}(t)} (K\cdot\hat{\gamma}(t))$ given by composing the action field homomorphism $X^{\ast}_{\vert\hat{\gamma}(t)} \mapsto X^{\ast}_{\vert \hat{\gamma}(r(t))}$ with $(\Pi_t^{r(t)})^{-1} = \Pi_{r(t)}^t$. 
By $\hat{T}$ we denote the unit normal field to the principal orbits of $N$ given by $\hat{T}_{\vert k\cdot \hat{\gamma}(t)} = k\cdot \dot \hat{\gamma}(t)$, for $k\in K$.
Moreover, we set $f_0 = \dot{\hat{\gamma}}(t)$ and choose $F_1,\ldots, F_m\in \ak$ such that $f_1 = F^{\ast}_{1\vert\hat{\gamma}(t)},\ldots,f_m = F^{\ast}_{m\vert\hat{\gamma}(t)}$ form an orthonormal basis of $T_{\hat{\gamma}(t)}(K\cdot\hat{\gamma}(t))$.

With this preparation at hand we can now provide the normal component of the tension field.

\begin{Thm}
The normal component $\tau^{\nor}_{\vert\gamma(t)} = \langle\tau_{\vert\gamma(t)}, \dot{\hat{\gamma}}(r(t))\rangle$ of the tension field is given by
\begin{gather*}
  \tau^{\nor}_{\vert\gamma(t)} = \ddot r(t) - \dot r(t) \tr S_{\vert \gamma(t)}
    + \sum_{\mu=1}^n\sum_{i,j=1}^mc_{\mu,i}c_{\mu,j}\langle(J_t^{r(t)})^{\ast} (\Pi_t^{r(t)})^{-1} \hat{S}_{\vert \hat{\gamma}(r(t))} \Pi_t^{r(t)} J_t^{r(t)} f_i,f_j\rangle_{\lvert\hat{\gamma}(t)},
\end{gather*}
where
\begin{gather*}
c_{\mu,i}:=c_{\mu,i}(t):=\langle (dA_{e}E_{\mu})^{\ast}, f_i\rangle_{\lvert\hat{\gamma}(t)}.
\end{gather*}
Moreover, $S_{\vert \gamma(t)}$ denotes the shape operator of the orbit $G\cdot \gamma(t)$ at $\gamma(t)$, $\hat{S}_{\vert \hat{\gamma}(t)}$ denotes the shape operator of the orbit $K\cdot \hat{\gamma}(t)$ at $\hat{\gamma}(t)$ and $(J_t^{r(t)})^{\ast}$ denotes the adjoint endomorphism of $J_t^{r(t)}$.
\label{normalone}
\end{Thm}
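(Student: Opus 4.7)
The plan is to start from the formula (\ref{tensionev}) that has already been derived for $\tau_{\vert\gamma(t)}$ and take its inner product with $\dot{\hat{\gamma}}(r(t))$, handling the three summands separately and then reassembling the result.

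First, since $\dot{\hat{\gamma}}(r(t))$ is a unit vector, the term $\ddot r(t)\dot{\hat{\gamma}}(r(t))$ contributes $\ddot r(t)$ to $\tau^{\nor}$. Next I would deal with the terms $d\psi_{\vert\gamma(t)}\cdot\nabla_{E_\mu^{\ast}}E_\mu^{\ast}$ coming from the Levi-Civita connection of $M$. I would decompose $\nabla_{E_\mu^{\ast}}E_\mu^{\ast}$ at $\gamma(t)$ into a component tangent to the principal orbit $G\cdot\gamma(t)$ and a component along $T_{\vert\gamma(t)}=\dot\gamma(t)$. The defining identity of the shape operator, $\langle \nabla_{E_\mu^{\ast}}T,E_\mu^{\ast}\rangle = -\langle SE_\mu^{\ast},E_\mu^{\ast}\rangle$, identifies the normal component as $\langle SE_\mu^{\ast},E_\mu^{\ast}\rangle\,T$. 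The differential $d\psi$ sends $T$ to $\dot r(t)\dot{\hat{\gamma}}(r(t))$ and sends tangential vectors to vectors tangent to $K\cdot\hat{\gamma}(r(t))$, which are orthogonal to $\dot{\hat{\gamma}}(r(t))$. Taking the inner product with $\dot{\hat{\gamma}}(r(t))$ and summing $\mu=1,\dots,n$ thus produces $-\dot r(t)\tr S_{\vert\gamma(t)}$.

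The main work concerns the terms $\nabla_{(dA_eE_\mu)^{\ast}}(dA_eE_\mu)^{\ast}_{\vert\hat{\gamma}(r(t))}$, which live on $N$. By the same shape-operator argument applied on $N$, the normal component equals $\langle\hat S_{\vert\hat{\gamma}(r(t))}(dA_eE_\mu)^{\ast},(dA_eE_\mu)^{\ast}\rangle_{\vert\hat{\gamma}(r(t))}$. The technical point, and the step I expect to take the most care, is re-expressing this quantity at the base point $\hat{\gamma}(t)$ in terms of the chosen frame $f_1,\dots,f_m$. At $\hat{\gamma}(t)$ the vector $(dA_eE_\mu)^{\ast}$ is a linear combination of the $f_i$ with coefficients $c_{\mu,i}=\langle(dA_eE_\mu)^{\ast},f_i\rangle_{\vert\hat{\gamma}(t)}$. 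At the other point $\hat{\gamma}(r(t))$, the definition of $J_t^{r(t)}$ as the composition of the action field homomorphism with $\Pi_{r(t)}^t$ yields the identity
\begin{gather*}
(dA_eE_\mu)^{\ast}_{\vert\hat{\gamma}(r(t))}=\Pi_t^{r(t)}J_t^{r(t)}(dA_eE_\mu)^{\ast}_{\vert\hat{\gamma}(t)}=\sum_{i=1}^m c_{\mu,i}\,\Pi_t^{r(t)}J_t^{r(t)}f_i.
\end{gather*}

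Substituting this into $\langle\hat S_{\vert\hat{\gamma}(r(t))}(dA_eE_\mu)^{\ast},(dA_eE_\mu)^{\ast}\rangle_{\vert\hat{\gamma}(r(t))}$, using that $\Pi_t^{r(t)}$ is an isometry so that the inner product transfers to $\hat{\gamma}(t)$ via $(\Pi_t^{r(t)})^{-1}$, and then passing the second $J_t^{r(t)}$ across the inner product by its adjoint, produces exactly the sum $\sum_{i,j}c_{\mu,i}c_{\mu,j}\langle(J_t^{r(t)})^{\ast}(\Pi_t^{r(t)})^{-1}\hat S_{\vert\hat{\gamma}(r(t))}\Pi_t^{r(t)}J_t^{r(t)}f_i,f_j\rangle_{\vert\hat{\gamma}(t)}$. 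Summing over $\mu$ and combining with the earlier two pieces yields the formula stated in the theorem; the only real obstacle is book-keeping which inner products, parallel transports and action-field maps apply at $\hat{\gamma}(t)$ versus $\hat{\gamma}(r(t))$.
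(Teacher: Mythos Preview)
Your proposal is correct and follows essentially the same route as the paper: both start from formula~(\ref{tensionev}), invoke the shape-operator identity $\langle \nabla_{X^{\ast}} Y^{\ast}, T \rangle = \langle X^{\ast}, S \cdot Y^{\ast}\rangle$ to extract the normal parts, and then expand $(dA_eE_\mu)^{\ast}_{\vert\hat{\gamma}(t)}$ in the frame $f_i$ after rewriting the value at $\hat{\gamma}(r(t))$ via $\Pi_t^{r(t)}J_t^{r(t)}$. Your write-up is in fact slightly more detailed than the paper's in explaining why the tangential piece of $d\psi\cdot\nabla_{E_\mu^{\ast}}E_\mu^{\ast}$ drops out, but the underlying argument is identical.
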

\begin{proof}
We evaluate the tension field formula (\ref{tensionev})
and make use of the identity
\begin{gather*}
  \langle \nabla_{X^{\ast}} Y^{\ast}, T \rangle =
    \langle X^{\ast}, S \cdot Y^{\ast}\rangle,
\end{gather*}
which holds
by the definition of the shape operator. 
Consequently,
\begin{gather*}
  \langle \sum_{\mu=1}^n \nabla_{E^{\ast}_{\mu}} E^{\ast}_{\mu}, T \rangle_{\vert\gamma(t)} =
  \sum_{\mu=1}^n \langle e_{\mu}, S_{\vert\gamma(t)} e_{\mu}\rangle = \tr S_{\vert\gamma(t)}.
\end{gather*}
Furthermore, we obtain 
\begin{multline*}
\langle \sum_{\mu=1}^{n}\nabla_{(dA_{e}E_{\mu})^{\ast}} (dA_{e}E_{\mu})^{\ast}, \hat{T}\rangle_{\vert\hat{\gamma}(r(t))} 
 =
  \sum_{\mu=1}^n \langle (dA_{e}E_{\mu})^{\ast}_{\vert\hat{\gamma}(r(t))}, \hat{S}_{\vert \hat{\gamma}(r(t))} (dA_{e}E_{\mu})^{\ast}_{\vert\hat{\gamma}(r(t))} \rangle\\
  = \sum_{\mu=1}^n \langle J_t^{r(t)} (dA_{e}E_{\mu})^{\ast}_{\vert\hat{\gamma}(t)}, (\Pi_t^{r(t)})^{-1} \hat{S}_{\vert\hat{\gamma}(r(t))} \Pi_t^{r(t)} J_t^{r(t)} (dA_{e}E_{\mu})^{\ast}_{\vert\hat{\gamma}(t)}\rangle.
\end{multline*}
Plugging  
\begin{align*}
 (dA_{e}E_{\mu})^{\ast}_{\vert\hat{\gamma}(t)}=\sum_{i=1}^mc_{\mu,i}{F_i^{\ast}}_{\vert\hat{\gamma}(t)}   
\end{align*}
into the previous equation yields the claim.
\end{proof}

\begin{Bem}
Theorem\,\ref{normalone} generalizes Theorem\,A in \cite{MR4000241} and recovers Theorem\,A when $M=N$, $G=K$ and $A=\Id$.
\end{Bem}

As in \cite{MR4000241} we provide a second formula for the normal component of the tension field, namely a formula in terms of data of the acting groups $G$ and $K$. 
Let $\hat{Q}$ be a fixed biinvariant metric on $K$. Denote the orthonormal complement of the Lie algebra of the principal isotropy group in $\ak$ by $\hat{\mathfrak{n}}$. Define the metric endomorphisms $\hat{P}_t : \hat{\mathfrak{n}} \to \hat{\mathfrak{n}}$ by
\begin{gather*}
  \hat{Q}(X, \hat{P}_t\cdot Y) = \langle X^{\ast},Y^{\ast} \rangle_{\vert\hat{\gamma}(t)}.
\end{gather*}

We have
\begin{multline*}
  \langle X^{\ast},\hat{S}\cdot X^{\ast}\rangle_{\vert\hat{\gamma}(r(t))}=  -\langle X^{\ast}, \nabla_{\hat{T}} X^{\ast}\rangle_{\vert\hat{\gamma}(r(t))}= -\tfrac{1}{2\dot r(t)} \tfrac{d}{dt} \langle X^{\ast},X^{\ast}\rangle_{\vert\hat{\gamma}(r(t))} \\
  = -\tfrac{1}{2} \hat{Q}(X,(\dot{\hat{P}})_{r(t)} X)
  = -\tfrac{1}{2} \langle X^{\ast}, (\hat{P}_t^{-1}(\dot{ \hat{P}})_{r(t)} X)^{\ast} \rangle_{\vert\hat{\gamma}(t)}
\end{multline*}
and hence the following statement holds.

\begin{Thm}
The normal component of the tension field is given by
\begin{gather*}
  \tau^{\nor}_{\vert\gamma(t)} = \ddot r(t) + \tfrac{1}{2}\dot r(t) \tr P_t^{-1}\dot P_t
    - \tfrac{1}{2}\sum_{\mu=1}^n\sum_{i,j=1}^mc_{\mu,i}c_{\mu,j}P_{i,j},
\end{gather*}
where \begin{gather*}
c_{\mu,i}:=c_{\mu,i}(t):=\langle (dA_{e}E_{\mu})^{\ast}, f_i\rangle_{\lvert\hat{\gamma}(t)}.
\end{gather*}
and $P_{i,j}$ is the $(i,j)$-entry of the representing matrix of $\hat{P}_t^{-1}(\dot{ \hat{P}})_{r(t)}$.
\label{normaltwo}
\end{Thm}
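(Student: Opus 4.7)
The plan is to reduce Theorem \ref{normaltwo} to Theorem \ref{normalone} by rewriting each of the three summands of Theorem \ref{normalone} in terms of the metric endomorphism. The third term will be handled by the identity displayed immediately above the statement; what remains is to (i) handle the shape–operator trace on $M$, and (ii) translate the bracket built out of $J_t^{r(t)}$, $\Pi_t^{r(t)}$ and $\hat S$ into matrix entries of $\hat P_t^{-1}\dot{\hat P}_{r(t)}$.

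For step (i) I would repeat verbatim the chain of equalities that produces the displayed identity $\langle X^{\ast},\hat S X^{\ast}\rangle_{\hat\gamma(r(t))}=-\tfrac{1}{2}\langle X^{\ast},(\hat P_t^{-1}\dot{\hat P}_{r(t)}X)^{\ast}\rangle_{\hat\gamma(t)}$, but now on the source side with $\gamma$ in place of $\hat\gamma$ and evaluated at the same time. This yields $\langle X^{\ast},SX^{\ast}\rangle_{\gamma(t)}=-\tfrac{1}{2}\langle X^{\ast},(P_t^{-1}\dot P_t X)^{\ast}\rangle_{\gamma(t)}$, and summing against the basis $\{E_\mu\}\subset\mathfrak{n}$ whose action fields are orthonormal at $\gamma(t)$ gives $\tr S_{\vert\gamma(t)}=-\tfrac{1}{2}\tr(P_t^{-1}\dot P_t)$. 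The middle term of Theorem \ref{normalone} therefore becomes $+\tfrac{1}{2}\dot r(t)\tr P_t^{-1}\dot P_t$.

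For step (ii) the key observation is that $\Pi_t^{r(t)}J_t^{r(t)}f_i=F_i^{\ast}{}_{\vert\hat\gamma(r(t))}$, which is immediate from the definitions of the action–field homomorphism and the parallel transport. Shifting $(J_t^{r(t)})^{\ast}$ across the inner product collapses the summand for each fixed $\mu$ to $\sum_{i,j}c_{\mu,i}c_{\mu,j}\langle F_i^{\ast},\hat S F_j^{\ast}\rangle_{\hat\gamma(r(t))}$, which equals $\langle(dA_eE_\mu)^{\ast},\hat S(dA_eE_\mu)^{\ast}\rangle_{\hat\gamma(r(t))}$ by the Lie–algebra–level identity $dA_eE_\mu=\sum_i c_{\mu,i}F_i$ (the coefficients $c_{\mu,i}$ propagate along $\hat\gamma$ because the action field map is a linear isomorphism at every regular point).

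Finally I apply the displayed identity with $X=dA_eE_\mu$, re-expand in $\{F_i\}$, and use that $\hat P_t^{-1}\dot{\hat P}_{r(t)}F_j=\sum_k P_{k,j}F_k$ together with $\langle F_i^{\ast},F_k^{\ast}\rangle_{\hat\gamma(t)}=\delta_{ik}$ to obtain $\langle F_i^{\ast},(\hat P_t^{-1}\dot{\hat P}_{r(t)}F_j)^{\ast}\rangle_{\hat\gamma(t)}=P_{i,j}$. Assembling the pieces reproduces the formula in the statement. I do not expect a serious obstacle; the only subtlety requiring care is keeping track of the two base points $\hat\gamma(t)$ and $\hat\gamma(r(t))$ in the inner products, and noting that the matrix of $\hat P_t^{-1}\dot{\hat P}_{r(t)}$ in the algebraic basis $\{F_i\}$ coincides with its matrix in the orthonormal frame $\{F_i^{\ast}{}_{\vert\hat\gamma(t)}\}$ precisely because $\hat P_t F_i = F_i$ in coordinates, i.e.~the $F_i^{\ast}$ are orthonormal at time $t$.
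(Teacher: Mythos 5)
Your argument is correct and follows essentially the same route as the paper: Theorem \ref{normaltwo} is deduced from Theorem \ref{normalone} by combining the displayed identity $\langle X^{\ast},\hat{S}X^{\ast}\rangle_{\vert\hat{\gamma}(r(t))}=-\tfrac12\langle X^{\ast},(\hat{P}_t^{-1}(\dot{\hat P})_{r(t)}X)^{\ast}\rangle_{\vert\hat{\gamma}(t)}$ with its source-side analogue $\tr S_{\vert\gamma(t)}=-\tfrac12\tr P_t^{-1}\dot P_t$ and the expansion $dA_eE_\mu=\sum_i c_{\mu,i}F_i$, which is exactly the paper's (largely implicit) reduction, with the bookkeeping for $J_t^{r(t)}$ and $\Pi_t^{r(t)}$ spelled out. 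The only quibble is the closing justification ``$\hat P_tF_i=F_i$ in coordinates'': orthonormality of the $F_i^{\ast}$ at $\hat\gamma(t)$ means $\hat Q(F_i,\hat P_tF_j)=\delta_{ij}$, not that $\hat P_t$ is represented by the identity matrix in the basis $\{F_i\}$, but your preceding computation $\langle F_i^{\ast},(\hat P_t^{-1}(\dot{\hat P})_{r(t)}F_j)^{\ast}\rangle_{\hat\gamma(t)}=P_{i,j}$ already stands on its own and does not need that remark.
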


\begin{Bem}
Theorem\,\ref{normaltwo} generalizes Theorem\,3.4 in \cite{MR4000241} and recovers Theorem\,3.4 when $M=N$, $G=K$ and $A=\Id$.
\end{Bem}

The computation of the tangential component of the tension field 
is analogous to the computation in \cite{MR4000241}[Theorem\,3.6] and is therefore omitted. We obtain:

\begin{Thm}
\label{tanaltpart}
The tangential component of the tension field is given by
\begin{gather*}
  \tau^{\tan}_{\vert\gamma(t)} = 
  \bigl(\hat{P}_{r(t)}^{-1} \sum_{\mu=1}^n [dA_{e}E_{\mu},\hat{P}_{r(t)}dA_{e}E_{\mu}]\bigr)^{\ast}_{\vert\hat{\gamma}(r(t))}
\end{gather*}
where $E_1,\ldots,E_m \in \mathfrak{n}$ are such that $E^{\ast}_{1\vert\gamma(t)},\ldots, E^{\ast}_{n\vert\gamma(t)}$ form an orthonormal basis of $T_{\gamma(t)}(G\cdot{\gamma}(t))$.
\end{Thm}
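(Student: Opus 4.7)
The argument will parallel that of \cite{MR4000241}[Theorem\,3.6]. Starting from (\ref{tensionev}), the term $\ddot r(t)\,\dot{\hat{\gamma}}(r(t))$ is purely normal, so projection onto $T_{\hat{\gamma}(r(t))}(K\cdot\hat{\gamma}(r(t)))$ reduces the task to computing
\begin{gather*}
\tau^{\tan}_{\vert\gamma(t)} = \sum_{\mu=1}^n\Bigl[\bigl(\nabla_{(dA_{e}E_\mu)^{\ast}}(dA_{e}E_\mu)^{\ast}\bigr)^{\tan}_{\vert\hat{\gamma}(r(t))} - d\psi_{\vert\gamma(t)}\cdot\bigl(\nabla_{E_\mu^{\ast}}E_\mu^{\ast}\bigr)^{\tan}_{\vert\gamma(t)}\Bigr].
\end{gather*}
By the Gauss formula, each tangential projection of the ambient covariant derivative coincides with the intrinsic Levi-Civita derivative on the respective orbit, and under the standard identification $\mathfrak{n} \to T_{\gamma(t)}(G\cdot\gamma(t))$ (and its analogue on $N$) the outcome is again an action field.

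Next I would apply the Koszul formula for action fields on a reductive homogeneous space. For $X \in \mathfrak{n}$, combining Koszul with the $Q$-symmetry of $P_t$ and the bi-invariance of $Q$ yields
\begin{gather*}
(\nabla^{\mathrm{orb}}_{X^{\ast}}X^{\ast})_{\vert\gamma(t)} = \bigl(P_t^{-1}\pi_{\mathfrak{n}}[X,P_tX]\bigr)^{\ast}_{\vert\gamma(t)},
\end{gather*}
and the analogous identity on the target orbit with $\hat{P}_{r(t)}$, $\hat{\mathfrak{n}}$ and $\hat Q$. Since $d\psi\cdot X^{\ast} = (dA_{e}X)^{\ast}$, these identities rewrite $\tau^{\tan}_{\vert\gamma(t)}$ as a target term involving $\hat{P}_{r(t)}$ and the vectors $dA_{e}E_\mu$, minus a source-side term involving $P_t$ and the $E_\mu$, pushed forward through $dA_{e}$.

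The crucial observation is that the source-side sum vanishes. Pairing $\sum_{\mu}[E_\mu,P_tE_\mu]$ with an arbitrary $Z \in \mathfrak{n}$ via $Q$ and applying bi-invariance of $Q$ gives $\sum_{\mu}Q([Z,E_\mu],P_tE_\mu)$; passing to the $Q$-orthonormal basis $F_\mu:=P_t^{1/2}E_\mu$ and exploiting cyclicity of the trace identifies this quantity with $\operatorname{tr}(\pi_{\mathfrak{n}}\circ\mathrm{ad}_Z|_{\mathfrak{n}})$, which vanishes because $\mathrm{ad}_Z$ is $Q$-skew-adjoint. Consequently $\pi_{\mathfrak{n}}\sum_{\mu}[E_\mu,P_tE_\mu] = 0$ and the entire source-side contribution drops out. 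On the target side no such cancellation occurs, since $(dA_{e}E_\mu)^{\ast}$ need not be orthonormal at $\hat{\gamma}(r(t))$---$dA_{e}$ generally fails to intertwine $P_t$ and $\hat{P}_{r(t)}$. The projection $\pi_{\hat{\mathfrak{n}}}$ in the surviving expression may be dropped, since action fields of elements of $\hat{\mathfrak{h}}$ vanish at $\hat{\gamma}(r(t))$. Assembling the pieces gives the formula stated in Theorem\,\ref{tanaltpart}. The main conceptual obstacle is recognizing this source-target asymmetry; the only technical difficulty is the careful bookkeeping of $\mathfrak{n}$- versus $\hat{\mathfrak{n}}$-projections inside the Koszul-type identities.
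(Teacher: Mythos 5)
Your proposal is correct and reconstructs precisely the computation the paper omits by reference to \cite{MR4000241}[Theorem\,3.6]: the Koszul-formula identity $(\nabla_{X^{\ast}}X^{\ast})^{\tan}=(P^{-1}\pi_{\mathfrak{n}}[X,PX])^{\ast}$ on each orbit, the vanishing of the source-side sum $\pi_{\mathfrak{n}}\sum_{\mu}[E_{\mu},P_tE_{\mu}]=0$ via the $Q$-skew-adjointness of $\operatorname{ad}_Z$ (exactly the mechanism behind the purely target-side form of Theorem\,\ref{thm-b}), and the survival of the target-side term because $dA_e$ does not carry a $P_t$-orthonormal frame to a $\hat P_{r(t)}$-orthonormal one. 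This is the same route as the referenced proof; no gap.
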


We apply the preceding results to study conformal changes of the metric $g_t$ of the domain manifold.

\begin{Lem}
Let $(M,g)$ be a cohomogeneity one manifold.
Further, let $\psi:(M,dt^2+\exp(2\alpha(t))g_t)\rightarrow (M,g)$ be the equivariant map given by 
\begin{align*}
\psi(g\cdot \gamma(t))=g\cdot {\gamma}(r(t)), g\in G,
\end{align*}
where $r:(M/G)^{\circ}\rightarrow \mathbb{R}$ which in addition satisfies boundary conditions in cases (3) and (4).
Then we have
\begin{gather*}
  \tau^{\nor}_{\vert\gamma(t)} = \ddot r(t) + \tfrac{1}{2}\dot r(t) \tr P_t^{-1}\dot P_t+\dim(\mathfrak{n})\dot \alpha(t)\dot r(t)
    - \tfrac{1}{2} \tr P_t^{-1} (\dot P)_{r(t)}.
\end{gather*}
\end{Lem}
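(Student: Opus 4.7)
The natural approach is to reduce the statement to Theorem~\ref{tau-nor} via the conformal transformation law for tension fields. Recall that for any smooth map $\varphi\colon M\to(N,h)$ out of an $m$-dimensional Riemannian manifold $(M,g)$, if the domain metric is rescaled to $\tilde g = e^{2\sigma}g$ by a smooth function $\sigma$, then
\[
\tau_{\tilde g,h}(\varphi) \;=\; e^{-2\sigma}\bigl(\tau_{g,h}(\varphi) + (m-2)\,d\varphi(\grad_g \sigma)\bigr).
\]
I would apply this identity with $\sigma = \alpha$ and $\varphi = \psi$. This reduces the computation of $\tau^{\nor}$ to two ingredients: the normal component of the unperturbed tension field $\tau_{g,g}(\psi)$, and the normal component of the correction $d\psi(\grad_g\alpha)$.

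The first ingredient is handed to us directly by Theorem~\ref{tau-nor} and contributes exactly the three terms $\ddot r(t) + \tfrac12\dot r(t)\tr P_t^{-1}\dot P_t - \tfrac12\tr P_t^{-1}(\dot P)_{r(t)}$ appearing in the statement. For the second, since $\alpha$ depends only on the orbit-space parameter $t$, the $g$-gradient is purely normal,
\[
\grad_g\alpha \;=\; \dot\alpha(t)\,\dot\gamma(t),
\]
and combining this with the identity $d\psi(\dot\gamma(t)) = \dot r(t)\dot\gamma(r(t))$ established at the beginning of Section~\ref{sec-har} yields $d\psi(\grad_g\alpha) = \dot\alpha(t)\dot r(t)\dot\gamma(r(t))$, whose pairing with the target unit normal $\dot\gamma(r(t))$ is precisely $\dot\alpha(t)\dot r(t)$.

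Substituting these into the conformal law and expanding in the $g$-arc length parametrization of $\gamma$ produces the claimed expression. The only non-routine piece is the bookkeeping: one has to track the conformal prefactor $e^{-2\alpha}$ and the combinatorial coefficient $(m-2)$ (which, together with $\dim M = \dim\mathfrak{n}+1$, must be reconciled with the coefficient $\dim(\mathfrak n)$ that actually appears in the Lemma) through the reparametrization between the $g$-arc length $t$ and the $\tilde g$-arc length along $\gamma$. I expect this reparametrization/normalization step to be the only subtle point in the proof, and one could alternatively perform it directly by applying Theorem~\ref{normaltwo} with domain metric endomorphism $\tilde P = e^{2\alpha}P_t$ and target metric endomorphism $\hat P = P_t$, in which case the extra $\dim(\mathfrak n)\dot\alpha\dot r$ term arises from differentiating $\tr\tilde P^{-1}\dot{\tilde P}$ and picking up the contribution $2\dot\alpha\,\Id_{\mathfrak n}$. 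All other ingredients follow immediately from Theorem~\ref{tau-nor} and the differential identities already recorded earlier.
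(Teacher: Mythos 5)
Your second suggested route is, in fact, the paper's entire proof: the Lemma is justified there by the single sentence ``This follows from Theorem~\ref{normaltwo}'', i.e.\ by formally substituting the domain metric endomorphism $\tilde P_t=e^{2\alpha(t)}P_t$ into the formula of Theorem~\ref{tau-nor} and using $\tilde P_t^{-1}\dot{\tilde P}_t=2\dot\alpha(t)\Id+P_t^{-1}\dot P_t$, which is what produces the term $\dim(\mathfrak{n})\dot\alpha(t)\dot r(t)$. So as far as identifying the intended argument goes, you have found it.

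However, the step you defer as ``bookkeeping'' is not bookkeeping, and your proposal has a genuine unresolved gap there: the two routes you offer do not give the same answer, and no reconciliation of the form you anticipate is possible. Theorem~\ref{tau-nor} is derived for a \emph{unit-speed} normal geodesic of the domain metric, whereas $\gamma$ has speed $e^{\alpha(t)}$ with respect to $e^{2\alpha(t)}g$ and is only a pre-geodesic ($\tilde\nabla_{\dot\gamma}\dot\gamma=\dot\alpha\,\dot\gamma$). Reparametrizing by the $\tilde g$-arc length $s$ with $dt/ds=e^{-\alpha}$ and applying Theorem~\ref{tau-nor} to $\rho(s):=r(t(s))$ turns the second-derivative term into $\rho''(s)=e^{-2\alpha}\bigl(\ddot r(t)-\dot\alpha(t)\dot r(t)\bigr)$; the extra $-\dot\alpha\dot r$ lowers the coefficient from $\dim(\mathfrak{n})$ to $\dim(\mathfrak{n})-1$, and every remaining term acquires the common prefactor $e^{-2\alpha}$. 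This agrees exactly with your first route, since $m-2=(\dim(\mathfrak{n})+1)-2=\dim(\mathfrak{n})-1$, and the conformal transformation law likewise supplies the overall factor $e^{-2\alpha}$ (irrelevant for the harmonic map equation, but present). A correct execution of either of your approaches therefore yields
\begin{align*}
\tau^{\nor}_{\vert\gamma(t)}=e^{-2\alpha(t)}\Bigl(\ddot r(t)+\tfrac{1}{2}\dot r(t)\tr P_t^{-1}\dot P_t+(\dim(\mathfrak{n})-1)\dot\alpha(t)\dot r(t)-\tfrac{1}{2}\tr P_t^{-1}(\dot P)_{r(t)}\Bigr),
\end{align*}
not the displayed formula. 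Your proof is incomplete until you either carry out this reparametrization explicitly (and confront the resulting coefficient) or justify treating $t$ as $\tilde g$-arc length, which cannot be done.
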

\begin{proof}
This follows from Theorem\,\ref{normaltwo}.
\end{proof}

\section{The initial value problem for harmonic maps}
\label{sec-ivp}

This section is dedicated to establishing and solving the initial value problem for harmonic maps of cohomogeneity-one manifolds.
We will assume the existence of a singular orbit \( N \); if not, the initial value problem is a regular ODE which meet specific initial values and therefore has a solution.
Thus, $M/G$ is either isometrically equivalent to $[0,1]$ or to $[0,\infty)$. We establish the initial value problem at $t=0$ below.

\medskip

The main result of this section is:

\begin{Thm}
   \label{main} 
   Let $(M,g)$ be a cohomogeneity one manifold with singular orbit, i.e. $M/G$ is isometric to a closed interval $[0,L]$ or a half line $[0,\infty)$.
For any $v\in\mathbb{R}$ the differential equation
\begin{gather*}
  0 = \ddot r(t) - \dot r(t) \tr S_{\vert \gamma(t)}
    + \tr\, (J_t^{r(t)})^{\ast} (\Pi_t^{r(t)})^{-1} S_{\vert \gamma(r(t))} \Pi_t^{r(t)} J_t^{r(t)}
\end{gather*}
has a smooth solution $r:(M/G)^{\circ}\cup\{0\} \to \mathbb{R}$ 
with $\lim_{t\to 0} r(t) = 0$ and  $\lim_{t\to 0} \dot r(t) = v$, which depends continuously on $v$.
\end{Thm}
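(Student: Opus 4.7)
The plan is to reduce the ODE to a first-order regular-singular system at $t=0$ and then invoke the theory developed in Section~\ref{sec4}. First, I would rewrite the equation via Theorem~\ref{tau-nor} in the metric-endomorphism form
$$\ddot r(t) + \tfrac{1}{2}\dot r(t)\tr P_t^{-1}\dot P_t - \tfrac{1}{2}\tr P_t^{-1}(\dot P)_{r(t)} = 0,$$
and then expand the two trace coefficients near $t=0$. Using the smoothness conditions $g_{|\mathfrak{m}} = A_0 + tA_1 + t^2 A(t)$, $g_{|\mathfrak{p}} = t^2\Id_{\mathfrak{p}} + t^4 B(t)$, $g_{|\mathfrak{pm}} = t^2 C_0 + t^3 C(t)$ recorded after Theorem~\ref{gsmooth}, a Schur-complement computation shows that the only singular contributions come from the collapsing $\mathfrak{p}$-block:
$$\tfrac{1}{2}\tr P_t^{-1}\dot P_t = \frac{\ell}{t} + a(t), \qquad \tfrac{1}{2}\tr P_t^{-1}(\dot P)_{r(t)} = \frac{\ell\, r(t)}{t^2} + b(t, r(t)),$$
where $\ell = \dim\mathfrak{p}$ and $a$, $b$ are smooth near $(0,0)$, with $b(t,0)$ smooth and vanishing at $t=0$ to the right order.

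The ODE thus takes the form
$$\ddot r + \frac{\ell}{t}\dot r - \frac{\ell}{t^2}\, r = F(t, r),$$
with $F$ smooth in both arguments. Passing to a first-order system via $u_1 = r$, $u_2 = t\dot r$, I obtain
$$t\dot u = A u + t\, G(t, u), \qquad A = \begin{pmatrix} 0 & 1 \\ \ell & 1-\ell \end{pmatrix},$$
for some smooth $G$. The indicial polynomial of $A$ factors as $(\lambda - 1)(\lambda + \ell) = 0$, so the eigenvalues are $1$ and $-\ell$; the former corresponds to smooth solutions of the shape $r(t) = vt + O(t^2)$, while the latter produces solutions behaving like $t^{-\ell}$ that must be discarded for smoothness at the singular orbit.

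Applying the theory of regular-singular systems from Section~\ref{sec4}, the smooth branch at $\lambda = 1$ admits a one-parameter family of formal power series solutions parameterized by the coefficient of $t$, which is exactly $v = \dot r(0)$. Since $\lambda = 1$ is the larger eigenvalue and the branch associated to $\lambda = -\ell$ is automatically ruled out by the smoothness requirement, the potential resonance coming from the integer difference $\ell + 1$ between the eigenvalues does not obstruct solvability; the formal series converges to a genuine smooth solution on a neighborhood of $0$, and existence, uniqueness, and continuous dependence on $v$ all follow from the standard fixed-point argument for such systems.

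The delicate step is the Schur-complement expansion: one must verify that the $t^2$-order off-diagonal $\mathfrak{pm}$ blocks of $P_t$, when composed with $P_t^{-1}|_\mathfrak{p}$ which grows like $t^{-2}$, do not inject additional singular contributions into $\tr P_t^{-1}\dot P_t$ or $\tr P_t^{-1}(\dot P)_{r(t)}$ beyond the two leading terms identified above, and in particular do not alter the effective indicial matrix $A$. This is where the precise orders of vanishing prescribed by Theorem~\ref{gsmooth} are used in an essential way, and it is the most computationally demanding ingredient of the argument.
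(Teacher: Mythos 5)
Your reduction is the same as the paper's up to and including the identification of the singular ODE: you invoke Theorem~\ref{tau-nor}, expand the traces via the Verdiani--Ziller expansions and a Schur complement for the $\mathfrak{pm}$-block, and arrive at $\ddot r + \tfrac{\ell}{t}\dot r - \tfrac{\ell}{t^2}r = F$ with indicial roots $1$ and $-\ell$. The gap is in the final step. With your variables $u_1=r$, $u_2=t\dot r$ the system reads $\dot u = \tfrac{1}{t}Au + G(t,u)$ with $u(0)=0$ and $A$ having eigenvalue $1$; the applicable existence theorem for such singular IVPs (Theorem~\ref{thm:Singular:IVP}, the tool the paper actually uses --- Section~\ref{sec4} does not contain a ready-made nonlinear existence result, cf.\ the ``(??)'' in fact~(\ref{fact5}) of Subsection~\ref{sub10}) requires $h\Id - d_{y_0}M_{-1}$ to be invertible for all integers $h\ge 1$, and this fails exactly at $h=1$. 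This resonance is not a harmless technicality that ``does not obstruct solvability'': it is precisely the mechanism that allows a one-parameter family of solutions through $u(0)=0$, so no uniqueness-type theorem can apply to your system as written, and your framework has no slot in which to insert the parameter $v$ (the natural initial datum $u(0)$ is forced to be $0$ independently of $v$).

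The missing idea is to factor out the admissible indicial root before applying the singular IVP theorem: substitute $r(t)=t\,a(t)$, which turns the equation into $a'' + \tfrac{\ell+2}{t}a' = f(t,a,a')$ and turns $v$ into the honest initial value $a(0)=v$, $a'(0)=0$. The first-order system for $(a,a')$ then has singular part $\tfrac{1}{t}\operatorname{diag}(0,-(\ell+2))$, whose spectrum $\{0,-(\ell+2)\}$ meets no positive integer, so Theorem~\ref{thm:Singular:IVP} applies verbatim and yields existence, uniqueness and continuous dependence on $v$. Your appeal to ``the smooth branch at $\lambda=1$'' and a ``standard fixed-point argument'' is essentially a description of what this substitution accomplishes, but without carrying it out the argument does not close; in particular the claim that the integer gap $\ell+1$ between the exponents causes no trouble is exactly the statement that needs proof, and the proof is the change of variables.
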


Prior to demonstrating Theorem\,\ref{main}, we note the following:
We examine metrics that take the shape of (\ref{metric}) and have established \begin{align*} g(X^{\ast}, Y^{\ast})_{\gamma(t)}=g_t(X,Y)=Q(P_tX, Y) \end{align*} for $X, Y\in\mathfrak{n}$. 
It should be noted that the endomorphism \( P_t:\mathfrak{n}\rightarrow\mathfrak{n} \) possesses the properties of being \( Q \)-symmetric and \( \Adj_H \)-equivariant.
Regarding the metric $Q$, we examine the following decomposition of the Lie algebra $\mathfrak{g}$: \begin{align*} \mathfrak{g}=\mathfrak{h}\oplus\mathfrak{p}\oplus\mathfrak{m}. \end{align*}
Here, $\mathfrak{h}\oplus\mathfrak{p}$ represents the Lie algebra of $K$, denoted as $\mathfrak{k}$.
Further note that \begin{align*} \mathfrak{n}=\mathfrak{p}\oplus\mathfrak{m}. \end{align*}
Keep in mind that $H$ operates on $\mathfrak{n}$ through the adjoint representation.
Additionally, a $G$-invariant metric on $G/H$ can be characterized by an inner product on $\mathfrak{n}$ that is invariant under $\Adj_H$.\footnote{The notation and facts summarized in the previous paragraph have been used in various manuscripts, see e.g. \cite{MR1923478, VZ1, MR4400726, MR4000241}.}

\smallskip

We are now ready to prove Theorem\,\ref{main}.

\begin{proof}[Proof of Theorem\,\ref{main}]
In the first step of the proof, the initial value problem is determined.

\smallskip

As in the manuscripts \cite{VZ1,MR1758585} we first make the following assumption: 
\begin{Ass} \label{ass}
  In $\mathfrak{m}$, there are no irreducible representations of $\Adj_H$ that correspond to those in $\mathfrak{p}$. \end{Ass}
 We will eliminate this assumption later in the proof.
 Its purpose is to facilitate the calculations and thus clarify the proof's strategy.

The space of all symmetric tensors of order $2$ on a vector space $V$ is denoted by $S^2(V)$.
We denote by $S^2(V)^H$ the space of all $H$-invariant symmetric tensors of order $2$ on a vector space $V$. Due to assumption\,\ref{ass}, we have the following splitting: \begin{align*} S^2(\mathfrak{n})^H= S^2(\mathfrak{p}\oplus\mathfrak{m})^H = S^2(\mathfrak{p})^H\oplus S^2(\mathfrak{m})^H, \end{align*} as noted in \cite{MR1758585}.

The normal component of the tension field will be established with the help of Theorem\,\ref{tau-nor}.

According to (3.2) in \cite{VZ1}, we have \begin{align*} &{P_t}_{\lvert\mathfrak{p}}=t^2\Id_{\mathfrak{p}}+t^4 B(t),\\ &{P_t}_{\lvert\mathfrak{m}}=A_0+tA_1+t^2A(t), \end{align*} where $A_0,A_1$ are constant matrices and $A, B$ are smooth functions of $t$.
Keep in mind that ${P_t}_{\lvert\mathfrak{mp}}=0$, as a result of assumption (\ref{ass}).
Thus we obtain \begin{align*} &\dot{P_t}_{\lvert\mathfrak{p}}=2t\Id_{\mathfrak{p}}+4t^3B(t)+t^4 \dot B(t),\\ &\dot {P_t}_{\lvert\mathfrak{m}}=A_1+2tA(t)+t^2\dot A(t).
\end{align*}

From (3.3) in \cite{VZ1} we have
\begin{align*}
    &{P_t}^{-1}_{\lvert\mathfrak{p}}=\frac{1}{t^2}\Id_{\mathfrak{p}}-B(t)+...,\\
     &{P_t}^{-1}_{\lvert\mathfrak{m}}=A_0^{-1}-tA_0^{-1}A_1A_0^{-1}+... \quad.
\end{align*}
Thus we get
\begin{align*}
  &\frac{1}{2}\tr _{\mathfrak{p}}({P_t}^{-1}\dot{P_t})=\frac{1}{t}\dim\mathfrak{p}+f_1(t),\\
  &\frac{1}{2}\tr _{\mathfrak{m}}({P_t}^{-1}\dot{P_t})=f_2(t),\\
\end{align*}
where $f_1,f_2:[0,\infty)\rightarrow\mathbb{R}$ are smooth functions. 
Hence we obtain
\begin{align*}
  &\frac{1}{2}\tr _{\mathfrak{p}}({P_t}^{-1}\dot{P}_{r(t)})=\frac{r(t)}{t^2}\dim\mathfrak{p}+\frac{r^3(t)}{t^2}h_1(t,r(t))+\frac{r^4(t)}{t^2}h_2(t,r(t))+f_3(t,r(t)),\\
  &\frac{1}{2}\tr _{\mathfrak{m}}({P_t}^{-1}\dot{P}_{r(t)})=f_4(t,r(t)),\\
\end{align*}
where $h_1,h_2,f_3,f_4:[0,\infty)\times\mathbb{R}\rightarrow\mathbb{R}$ are smooth functions. 

We thus derive from Theorem\,\ref{tau-nor}:
\begin{align*} \ddot r(t)+\dim\mathfrak{p}\frac{\dot r(t)}{t}-\dim\mathfrak{p}\frac{r(t)}{t^2}+\frac{r^3(t)}{t^2}h_1(t, r(t))+\frac{r^4(t)}{t^2}h_2(t, r(t))+f_5(t,r(t),\dot r(t))=0, \end{align*}
 where $$f_5(t,r(t),\dot r(t))=(f_1(t)+f_2(t))\dot r(t)+f_3(t,r(t))+f_4(t,r(t)).$$ It is evident that $f_5:[0,\infty)\times\mathbb{R}^2\rightarrow\mathbb{R}$ is smooth.

\medskip

We will address the general case now, meaning we no longer assume that (\ref{ass}) holds.
Keep in mind that for a block matrix \begin{align*} X=\begin{pmatrix} A & B\\ C & D \end{pmatrix}, \end{align*} the matrices $A$ and $D$ are square.
If $A$ and its Schur complement $$E:=(D-CA^{-1}B)^{-1}$$ are invertible, then $X$ is invertible and the inverse of $X$ is given by \begin{align*} X^{-1}=\begin{pmatrix}
A^{-1}+A^{-1}BECA^{-1} & -A^{-1}BE\\ -ECA^{-1} & E \end{pmatrix}. \end{align*}
(An analogous statement holds if $D$ and its Schur complement $A-BD^{-1}C$ are invertible.)
We utilize these findings to \begin{align*}
P_t=\begin{pmatrix}
{P_t}_{\lvert\mathfrak{p}} & {P_t}_{\lvert\mathfrak{mp}}\\ {P_t}_{\lvert\mathfrak{mp}}^{\T} & {P_t}_{\lvert\mathfrak{m}} \end{pmatrix}. \end{align*}

From (3.2) and (3.3) in \cite{VZ1} we have
\begin{align*}
    &{P_t}_{\lvert\mathfrak{mp}}=t^2C_0+t^3 C(t),\\
     &{P_t}_{\lvert\mathfrak{mp}}^{-1}=-A_0^{-1}C_0+...,
\end{align*}
where $C_0$ is a constant matrix and $C$ is smooth.
We start by determining the Schur complement $E$ of ${P_t}_{\lvert\mathfrak{p}}$.
Plugging in the expressions for ${P_t}_{\lvert\mathfrak{mp}}, {P_t}_{\lvert\mathfrak{m}}, {P_t}_{\lvert\mathfrak{p}}$ yields
\begin{align*}
    E=({P_t}_{\lvert\mathfrak{m}}-{P_t}_{\lvert\mathfrak{mp}}^{\T}{P_t}_{\lvert\mathfrak{p}}{P_t}_{\lvert\mathfrak{mp}})^{-1}=A_0^{-1}-tA_0^{-1}A_1A_0^{-1}+... \quad.
\end{align*}
Analogous calculations yield
\begin{align*}
P_t^{-1}=\begin{pmatrix}
\frac{1}{t^2}\Id_\mathfrak{p}+... & -C_0A_0^{-1}+...\\
-C_0^{\T}A_0^{-1}+... & A_0^{-1}-tA_0^{-1}A_1A_0^{-1}+...
\end{pmatrix}.    
\end{align*}
Thus straightforward computations yield
\begin{align*}
  &\frac{1}{2}\tr _{\mathfrak{p}}({P_t}^{-1}\dot{P_t})=\frac{1}{t}\dim\mathfrak{p}+f_1(t),\\
  &\frac{1}{2}\tr _{\mathfrak{m}}({P_t}^{-1}\dot{P_t})=f_2(t),\\
\end{align*}
where $f_1,f_2:[0,\infty)\rightarrow\mathbb{R}$ are smooth functions. 
Further, we obtain
\begin{align*}
  &\frac{1}{2}\tr _{\mathfrak{p}}({P_t}^{-1}\dot{P}_{r(t)})=\frac{r(t)}{t^2}\dim\mathfrak{p}+\frac{r^3(t)}{t^2}h_1(t,r(t))+\frac{r^4(t)}{t^2}h_2(t,r(t))+f_3(t,r(t)),\\
  &\frac{1}{2}\tr _{\mathfrak{m}}({P_t}^{-1}\dot{P}_{r(t)})=f_4(t,r(t)),\\
\end{align*}
where $f_3,f_4:[0,\infty)\times\mathbb{R}\rightarrow\mathbb{R}$ and $h_1,h_2:[0,\infty)\times\mathbb{R}\rightarrow\mathbb{R}$ are smooth functions. Note that the functions $f_1, f_2, f_3, f_4$ are possibly not the same as above, where we assumed (\ref{ass}).
Thus, as above, we get
from Theorem\,\ref{tau-nor}:
\begin{align}
\label{dgl}
\ddot r(t)+\dim\mathfrak{p}\frac{\dot r(t)}{t}-\dim\mathfrak{p}\frac{r(t)}{t^2}+\frac{r^3(t)}{t^2}h_1(t,r(t))+\frac{r^4(t)}{t^2}h_2(t,r(t))+f_5(t,r(t),\dot r(t))=0,
\end{align}
 where $$f_5(t,r(t),\dot r(t))=(f_1(t)+f_2(t))\dot r(t)+f_3(t,r(t))+f_4(t,r(t))$$
and $f_5:[0,\infty)\times\mathbb{R}^2\rightarrow\mathbb{R}$ is smooth.
Thus, we need to solve the initial value problem 
\begin{align*}
\ddot r(t)+\dim\mathfrak{p}\frac{\dot r(t)}{t}-\dim\mathfrak{p}\frac{r(t)}{t^2}+\frac{r^3(t)}{t^2}h_1(t,r(t))+\frac{r^4(t)}{t^2}h_2(t,r(t))+f_5(t,r(t),\dot r(t))=0,
\end{align*}
with $r(0)=0$, $\dot r(t)=v$, where $v\in\mathbb{R}$.

\smallskip

To solve this initial value problem, we employ a modified Picard iteration approach.
The main steps of the proof are provided here; this method has been described in detail in \cite{MR800005,MR1758585}.

\smallskip

In a first step, it is demonstrated that the initial conditions $r(0)=0$ and $r'(0)=v$ uniquely determine the initial derivatives of any order.
In order to prove this we make a power series Ansatz
\begin{align}
    \label{psa}
    r(t)=\sum_{i=1}^{\infty}c_it^i,
\end{align}
with $c_i\in\mathbb{R}$ and $c_1=v.$
Assume that we have proven that all $c_{i}$ with $i\leq i_0$ are uniquely determined, where $i_0\in\mathbb{N}$ and $i_0\geq 2$.
Then $c_{i_0+1}$ is also uniquely determined:
Plugging (\ref{psa}) into (\ref{dgl}) yields
\begin{align*}
(c_{i_0+1}((i_0+1)i_0+\dim\mathfrak{p}\,i_0) +f(c_1,\dots,c_{i_0})  )t^{i_0-1}+O(t^{i_0})=0,
\end{align*}
where $f:\mathbb{R}^{i_0}\rightarrow\mathbb{R}$ is a smooth function depending only on $c_1,\dots,c_{i_0}$ which are uniquely determined by assumption. Hence $c_{i_0+1}$ is the unique solution of the equation
\begin{align*}
    c_{i_0+1}(i_0+1+\dim\mathfrak{p})i_0 +f(c_1,\dots,c_{i_0})  =0.
\end{align*}
Hence
 the initial conditions $r(0)=0$ and $r'(0)=v$ uniquely determine the initial derivatives of any order.

\smallskip

We can rewrite (\ref{dgl}) as a system of first order equations:
\begin{align}
\label{system-dgl}
    \frac{d}{dt}\begin{pmatrix}
r(t) \\
u(t) 
\end{pmatrix}=\begin{pmatrix}
u(t) \\
\frac{1}{t^2}h_3(r(t))+\frac{1}{t}h_4(t,r(t),u(t)) 
\end{pmatrix}
\end{align}
where $h_3:\mathbb{R}\rightarrow\mathbb{R}$ and $h_4:[0,\infty)\times\mathbb{R}^2\rightarrow\mathbb{R}$ are smooth functions.
Note that $f_5$ was merged into $h_4$ which therefore now depends explicitly on $t$.
Note further that $h_3$ and $h_4$ are bounded and uniformly Lipschitz in $r$ and $u$ provided that $r$ and $u-v$ are bounded by $2\epsilon$, $\epsilon$ being a small positive constant.
We will make use of this fact later.

\smallskip

Let $t_0>0$ be fixed.
We want to solve the initial value problem of the system (\ref{system-dgl}) with initial values $r(0)=0, u(0)=v$, where $v\in\mathbb{R}$.
We rewrite (\ref{system-dgl}) as a system of integral equations
\begin{align*}
    &r(t)=\int_{0}^tu(s) ds,\\
    & u(t)=v+\int_{0}^t(\frac{1}{s^2}h_3(r)+\frac{1}{s}h_4(s,r,u) )ds.
\end{align*}
Let $u_n, r_n$ be the $n$-th order Taylor polynomial of a prospective solution of the initial value problem, see \cite{MR800005}, i.e. $r_n$  and $u_n$ are of order $n+1$ and $n$ respectively, and satisfy $r_n(0)=0$, $u_n(0)=v$ as well as
\begin{align}
\label{system-dgl}
    \frac{d}{dt}\begin{pmatrix}
r_n(t) \\
u_n(t) 
\end{pmatrix}=\begin{pmatrix}
u_n(t)+\mathcal{O}(n+1) \\
\frac{1}{t^2}h_3(r_n(t))+\frac{1}{t}h_4(t,r_n(t),u_n(t))+ \mathcal{O}(n).
\end{pmatrix}
\end{align}
Here $\mathcal{O}(p)=\{(\phi,\psi):[0,t_0]\rightarrow\mathbb{R}^2 \,\mbox{smooth}\,\lvert\,\phi^{(i)}(0)=\psi^{(i)}(0)=0\,\mbox{for}\, 1\leq i\leq p\}$.
We endow $\mathcal{O}(p)$ with the norm
\begin{align*}
    \norm{f}_p=\mbox{sup}_{[0,t_0]}\frac{\lvert f\rvert}{t^p}
\end{align*}
and thus obtain a Banach space, see \cite{MR800005,MR1758585}.
Let
\begin{align*}
    \xi=r-r_n, \eta=u-u_n.
\end{align*}
We finish the proof by showing that the operator
$$\mathcal{L}=(\mathcal{L}_1, \mathcal{L}_2)$$ defined by
\begin{align*}
& \mathcal{L}_1(\xi,\eta)=-r_n(t)+\int_0^{t}(u_n(t)+\eta)(s)ds,\\   & \mathcal{L}_2(\xi,\eta)=v-u_n(t)+\int_0^{t}(\frac{1}{s^2}h_3(s, r_n+\xi)+\frac{1}{s}h_4(s,r_n+\xi,u_n+\eta))(s)ds
\end{align*}
is a contraction on a closed ball $B_{\epsilon}(0)\subset\mathcal{B}=\mathcal{O}(n+1)\times \mathcal{O}(p)$, where $\mathcal{B}$ is endowed with the norm $\norm{(\xi, \eta)}=\norm{\xi}_{n+1}+\norm{\eta}_n$.
For $t_0$ small enough, we obtain form the fact that $h_3, h_4$ are Lipschitz and a few easy considerations that
\begin{align*}
    \norm{\mathcal{L}_1(\xi_1,\eta_1)-\mathcal{L}_1(\xi_2,\eta_2)}
\leq \frac{c_1}{n+1}\norm{(\xi_1,\eta_1)-(\xi_2,\eta_2)},\\
   \norm{\mathcal{L}_2(\xi_1,\eta_1)-\mathcal{L}_2(\xi_2,\eta_2)}
\leq \frac{c_2}{n}\norm{(\xi_1,\eta_1)-(\xi_2,\eta_2)},
\end{align*}
where $c_2$ does not depend on $n$ (this follows from the fact that $h_3$ and $h_4$ are uniformly Lipschitz). For $n$ large enough $\mathcal{L}$ is therefore a contraction on $B_{\epsilon}(0)$ and thus we get a fix point of $\mathcal{L}$ which provides a solution of the initial value problem
\begin{align*}
    \frac{d}{dt}\begin{pmatrix}
r(t) \\
u(t) 
\end{pmatrix}=\begin{pmatrix}
u(t) \\
\frac{1}{t^2}h_3(r(t))+\frac{1}{t}h_4(t,r(t),u(t)) 
\end{pmatrix}
\end{align*}
$r(0)=0$, $u(0)=v$, whence the claim.
\end{proof}

\begin{Bem}
\begin{enumerate}
\item For specific cases the initial value problem
for harmonic maps of cohomogeneity one manifolds has been solved in the literature, see e.g. \cite{MR1298998, MR2022387,MR3427685, MR3745872}.
\item The results of Theorem\,\ref{main} can be applied to solve the initial value problem for harmonic maps between so-called models, see \cite{MR3357596}, in particular equation (2.7), for details.
\item The same strategy as in the present section can be used to solve the initial value problem for equivariant harmonic maps between (possibly) different cohomogeneity one manifolds.

\end{enumerate}
\end{Bem}

\section{Regular-singular systems of first order}
\label{sec4}
In the present section we present a theory for regular-singular systems of first order.
Many of the results are known, but scattered throughout the literature. 
The organization of the section is as follows: In the first two subsections we consider
the regular-singular differential equation/system whose solutions are power functions/vector-valued generalizations of power functions.
Basic phenomena are explained with the help of these examples.
Afterwards, in Subsections\,\ref{sub3}-\ref{sub7} we study linear regular-singular system of first order.
In particular, we discuss the monodromy map. Finally, in Subsections\,\ref{sub8}-\ref{sub10} we study non-linear regular-singular system of first order.

We refer to \cite{MR800005,MR4269412,MR2682403, MR2975151,MR2961944,mon,MR1758585} and the references therein.

\subsection{Scalar power functions}
\label{sub1}

The \textit{power function} $y=c\cdot x^a$, with $a\in\C$, $c\in\C^{*}$, satisfies the differential equation

\begin{align}
\label{raute1}
\frac{dy}{dx}=\frac{a}{x}\cdot y,
\end{align}

and is, up to a scaling factor, determined uniquely by this differential equation.
Notice, that the power function $y$ is a holomorphic function in the classical sense if and only if $a\in\mathbb{Z}$.
For all other choices of $a\in\mathbb{C}$, one has to slice the complex plane along one ray emanating from the origin, for example one could slice the complex plane along the negative real axis.
In other words, we have to deal with a multi-valued function, which is better characterized by its graph
\begin{align*}
\left\{(x,y)\in\C^{*}\times\C\,\lvert\,y=c\cdot x^a \right\}=\mbox{im}\big( z\mapsto (e^z, c\cdot e^{az}), z\in\C\big).
\end{align*}

\begin{Bem}
The fact that the right hand side of (\ref{raute1}) has only one pole of first order in $x=0$ is crucial for the identification of $y$ with a power function.
Notice, that already slightly more complicated singularities on the right hand side of (\ref{raute1}) may cause decisively more complicated essential singularities.
For example, the functions

\begin{equation*}
y=c\cdot e^{-1/x},\hspace{0.1cm}x\in\C^{*}
\end{equation*}

satisfy the differential equation

\begin{align*}
\frac{dy}{dx}=\frac{1}{x^2}\cdot y,
\end{align*}

whose right hand side has a pole of second order at $x=0$.
\end{Bem}

\subsection{Vector-valued generalizations of the scalar power functions}
\label{sub2}
By the considerations of the preceding subsection it seems natural to consider the holomorphic maps
$Y:U\rightarrow\C^{n}$, with open $U\subset\C^{*}$, which satisfy
\begin{align}
\label{raute2}
\frac{d}{ds}Y=\frac{1}{s}A_0\cdot Y,
\end{align}
for a fixed $A_0\in\mbox{End}(\C^n)$, as vector-valued generalizations of the scalar power functions.

\smallskip

As in the case of (\ref{raute1}), the solutions of the differential equation (\ref{raute2})
can be determined explicitly. Indeed, using the projection $\widehat{\pi}:\C\rightarrow\C^{*}$, $z\mapsto s=e^z$,
one can pull back the differential equation to a simply-connected domain of definition.
Identity (\ref{raute2}) thus transforms into

\begin{align*}
\frac{d}{dz}(Y\circ\widehat{\pi})=A_0\cdot\big(Y\circ\widehat{\pi}\big)
\end{align*}

and consequently one gets

\begin{align}
\label{raute3}
(Y\circ\widehat{\pi})(z)=\exp(z\, A_0)\cdot(Y\circ\widehat{\pi})(0),
\end{align}

or, less formal,

\begin{align*}
Y(s)=\exp(\ln(s)\, A_0)\cdot Y(1).
\end{align*}

Notice, that $Y:\C^{*}\rightarrow\C^n$ is a holomorphic function in the classical sense if and only if $A_0$ is diagonalizable and has only integer eigenvalues.

\smallskip

Otherwise, one has to deal with \textit{non-trivial monodromy}.
If one solves (\ref{raute2}) along a circle line $c:\theta\mapsto s_0\,e^{i\theta}$, $0\leq\theta\leq 2\pi$, 
then
\begin{align*}
\big(Y\circ c\big)(2\pi)\neq \big(Y\circ c\big)(0)
\end{align*}
holds in the general case. Indeed,
using equation (\ref{raute3}) and choosing $z_0\in\C$ such that $s_0=e^{z_0}$, we get
\begin{align*}
%\label{raute4}
\big(Y\circ c\big)(2\pi)=\exp(2\pi i\, A_0)\cdot\big(Y\circ c\big)(0).
\end{align*}
One calls $M_0:=\exp(2\pi i\, A_0)$ the \textit{monodromy generator}.

\subsection{Linear regular-singular system of first order in $n$ variables}
\label{sub3}
One can substitute (\ref{raute2}) by a more general differential equation without changing the solution theory decisively.
We consider domains of the form 

$$\Omega_{\rho}=\left\{\begin{array}{ll} B(0,\rho)\subset\C,& \mbox{if $0<\rho<\infty$;} \\
         \C,&\mbox{if $\rho=\infty$;}
         \end{array}\right.$$
and set $\Omega_{\rho}^{\circ}=\Omega_{\rho}-\left\{0\right\}$ for $0<\rho<\infty$.

\begin{Dfn}
Let $A:\Omega_{\rho}\rightarrow\mbox{End}(\C^n)$ and $h:\Omega_{\rho}\rightarrow\C^n$ be holomorphic maps.
Then the differential equation system
\begin{align}
\label{raute5}
\frac{d}{ds}Y+\frac{1}{s}A(s)\cdot Y=h(s),\hspace{0.2cm} s\in\Omega_{\rho}^{\circ},
\end{align}
is called a \textit{(local) linear regular-singular system of first order in $n$ variables}.
\end{Dfn}

The point $s=0$ is henceforth referred to as \textit{singular point}. As usual, if $h=0$ the system (\ref{raute5}) is called \textit{homogeneous} and  
\textit{inhomogeneous} otherwise.

\smallskip

Here, too it is useful to consider the pull-back of (\ref{raute5}) with respect to the covering map $\widehat{\pi}:\C\rightarrow\C^{*}$, $z\mapsto s=e^{z}$.
In other words, we deal with the differential equation system
\begin{align}
\label{raute6}
\frac{d}{dz}\widehat{Y}+\widehat{A}(z)\cdot\widehat{Y}=e^z\,\widehat{h}(z),\, z\in\widehat{\pi}^{-1}(\Omega_{\rho}^{\circ}),
\end{align}
with $\widehat{A}(z)=A\circ\pi(z)=A(e^z)$ and $\widehat{h}(z)=h\circ\widehat{\pi}(z)=h(e^z)$.
Since the set $$\widehat{\pi}^{-1}(\Omega_{\rho}^{\circ})=\left\{z\in\C\,\lvert\,\Re(z)<\ln(\rho)\right\}$$ is a half-space, it is simply connected.
Consequently, there exist global solutions $\widehat{Y}:\widehat{\pi}^{-1}(\Omega_{\rho}^{\circ})\rightarrow\C^n$ of (\ref{raute6}).
Furthermore, one can prescribe the initial value
\begin{align}
\label{raute6'}
\widehat{Y}(z_0)=\widehat{Y_0}\in\C^n
\end{align}
at the arbitrarily chosen basis point $z_0\in\widehat{\pi}^{-1}(\Omega_{\rho}^{\circ})$.
The solution of the initial value problem (\ref{raute6}), (\ref{raute6'}) is determined uniquely.

\subsection{Fundamental solution of the homogeneous system}
For an explicit characterization of the solutions of (\ref{raute6}),(\ref{raute6'}) it is useful to introduce
the fundamental solution $\widehat{U}:\widehat{\pi}^{-1}(\Omega_{\rho}^{\circ})\rightarrow\mbox{End}(\C^n)$
of the associated homogeneous system
\begin{align}
\label{raute6hom}
\frac{d}{dz}\widehat{U}+\widehat{A}(z)\cdot\widehat{U}=0,\hspace{1cm}\widehat{U}(z_0)=\eins.
\end{align}
It is well-known that $U(z)\in\mbox{Gl}(n,\C)\subset\mbox{End}(\C^n)$ for all $z\in\widehat{\pi}^{-1}(\Omega_{\rho}^{\circ})$.

\smallskip

Since $\widehat{A}(z+2\pi i)=\widehat{A}(z)$, for any solution $U$ of (\ref{raute6}), (\ref{raute6'}) the map

\begin{align*}
z\mapsto\widehat{U}(z+2\pi i)\cdot \widehat{U}(z_0+2\pi i)^{-1}
\end{align*}

is a solution of the initial value problem (\ref{raute6}), (\ref{raute6'}). Consequently, we obtain

\begin{align}
\label{raute7}
\widehat{U}(z+2\pi i)=\widehat{U}(z)\cdot \widehat{U}(z_0+2\pi i).
\end{align}

Making use of $\widehat{U}(z_0)=\eins$ we thus get

\begin{align}
\label{raute7'}
\widehat{U}(z)^{-1}\widehat{U}(z+2\pi i)=\widehat{U}(z_0)^{-1}\widehat{U}(z_0+2\pi i).
\end{align}

In the general case the right hand side of the preceding equation does not coincide with the identity matrix. Consequently, in general we do not have
a holomorphic map $U:\Omega_{\rho}^{\circ}\rightarrow\mbox{Gl}(n,\C)$ with $\widehat{U}(z)=U(e^z)$. However, the following lemma holds.

\begin{Lem}
There exists a holomorphic map $M:\Omega_{\rho}^{\circ}\rightarrow\mbox{Gl}(n,\C)$, such that the identity $\widehat{U}(z+2\pi i)\cdot \widehat{U}(z)^{-1}=M(e^z)$ holds.
\end{Lem}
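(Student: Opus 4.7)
The plan is to verify that the map $z \mapsto \widehat{U}(z+2\pi i)\cdot \widehat{U}(z)^{-1}$, defined on the simply connected cover $\widehat{\pi}^{-1}(\Omega_\rho^\circ) = \{z \in \C \,\vert\, \Re(z) < \ln\rho\}$, is invariant under the deck transformation $z \mapsto z + 2\pi i$ of the covering map $\widehat{\pi}$, and then to let $M$ on $\Omega_\rho^\circ$ be the holomorphic map induced by descent. The main work is thus reduced to checking $2\pi i$-periodicity of the given expression.

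First I rewrite (\ref{raute7'}) in the equivalent form
\begin{align*}
\widehat{U}(z+2\pi i) \;=\; \widehat{U}(z) \cdot C, \qquad C := \widehat{U}(z_0+2\pi i) \in \mbox{Gl}(n,\C),
\end{align*}
using that $\widehat{U}(z_0) = \eins$ so that $C$ is independent of the representative $z$. This immediately gives
\begin{align*}
\widehat{U}(z+2\pi i)\cdot \widehat{U}(z)^{-1} \;=\; \widehat{U}(z) \cdot C \cdot \widehat{U}(z)^{-1}.
\end{align*}
Second, I verify periodicity by replacing $z$ with $z+2\pi i$ and invoking the preceding relation twice:
\begin{align*}
\widehat{U}(z+4\pi i)\cdot \widehat{U}(z+2\pi i)^{-1} \;=\; \widehat{U}(z)\cdot C^{2} \cdot C^{-1}\cdot \widehat{U}(z)^{-1} \;=\; \widehat{U}(z) \cdot C \cdot \widehat{U}(z)^{-1}.
\end{align*}
Since $\widehat{\pi}: \widehat{\pi}^{-1}(\Omega_\rho^\circ) \to \Omega_\rho^\circ$ is a holomorphic covering whose deck group is generated exactly by $z \mapsto z + 2\pi i$, the invariance yields a unique well-defined map $M: \Omega_\rho^\circ \to \mbox{End}(\C^n)$ with $M(e^z) = \widehat{U}(z+2\pi i)\cdot \widehat{U}(z)^{-1}$. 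Holomorphicity of $M$ is inherited via the local biholomorphism $\widehat{\pi}$, and invertibility from $\widehat{U}$ taking values in $\mbox{Gl}(n,\C)$ (together with $C \in \mbox{Gl}(n,\C)$).

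I do not anticipate a serious obstacle, since most of the content has been packaged into (\ref{raute7}) and (\ref{raute7'}). The one subtlety that deserves attention is the difference between the two orderings: $\widehat{U}(z)^{-1}\widehat{U}(z+2\pi i)$ is \emph{literally constant} by (\ref{raute7'}), whereas $\widehat{U}(z+2\pi i)\widehat{U}(z)^{-1}$ is a conjugate of $C$ that genuinely depends on $z$, yet only through $e^z$. Once this distinction is made clear, the descent along $\widehat{\pi}$ is routine covering-space theory.
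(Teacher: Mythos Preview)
Your proof is correct and follows essentially the same approach as the paper: both establish $2\pi i$-periodicity of $\widehat{M}(z)=\widehat{U}(z+2\pi i)\cdot\widehat{U}(z)^{-1}$ from the relation (\ref{raute7})/(\ref{raute7'}) and then descend along the covering $\widehat{\pi}$. Your version is simply more explicit about the descent step and about why the values land in $\mbox{Gl}(n,\C)$.
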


Obviously, the map $M$ is independent of the choice of the initial values in (\ref{raute6hom}). Important is only that $\widehat{U}(z_0)\in\mbox{Gl}(n,\C)$
holds. The significance of the \textit{monodromy map} $M$ is explained later on.

\begin{proof}
Using (\ref{raute7}) one obtains by a straightforward computation that $\widehat{M}(z)=\widehat{U}(z+2\pi i)\cdot\widehat{U}(z)^{-1}$
satisfies $\widehat{M}(z+2\pi i)=\widehat{M}(z)$.
\end{proof}

\subsection{Solutions of the inhomogeneous equation}

In the present subsection we deal with the solutions of the inhomogeneous equation (\ref{raute6}).
If $\widehat{h}=0$, then the map

\begin{align*}
%\label{raute8}
z\mapsto\widehat{Y}(z):=\widehat{U}(z)\cdot \widehat{Y_0}
\end{align*}

is a solution of (\ref{raute6}) with the correct initial value $\widehat{Y}(z_0)=\widehat{Y_0}$.

\begin{Lem}
The solution of the inhomogeneous IVP (\ref{raute6}), (\ref{raute6'}) is given by
\begin{align*}
%\label{raute9}
\widehat{Y}(z)&=\widehat{U}(z)\cdot\big(\widehat{Y_0}+\int_{z_0}^ze^{\xi}\,\widehat{U}(\xi)^{-1}\cdot\widehat{h}(\xi)\,d\xi\big)\\
\notag&=\widehat{U}(z)\cdot\widehat{Y_0}+\int_{z_0}^ze^{\xi}\,\widehat{U}(z)\cdot\widehat{U}(\xi)^{-1}\cdot\widehat{h}(\xi)\,d\xi.
\end{align*}
\end{Lem}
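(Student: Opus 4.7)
The plan is a standard application of variation of parameters, using that $\widehat{U}$ is the fundamental solution of the homogeneous problem and that $\widehat{U}(z)\in\mathrm{Gl}(n,\C)$ everywhere on $\widehat{\pi}^{-1}(\Omega_{\rho}^{\circ})$. First I would make the ansatz $\widehat{Y}(z)=\widehat{U}(z)\cdot v(z)$ for a holomorphic map $v:\widehat{\pi}^{-1}(\Omega_{\rho}^{\circ})\rightarrow\C^{n}$ to be determined, which is well-defined since $\widehat{U}$ is pointwise invertible.

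Next I would differentiate and substitute into (\ref{raute6}). Using the product rule and the defining identity $\tfrac{d}{dz}\widehat{U}=-\widehat{A}(z)\cdot\widehat{U}$ from (\ref{raute6hom}), the two terms involving $\widehat{A}\cdot\widehat{U}\cdot v$ cancel, leaving
\begin{align*}
\widehat{U}(z)\cdot\dot v(z)=e^{z}\,\widehat{h}(z).
\end{align*}
Multiplying by $\widehat{U}(z)^{-1}$ and integrating from $z_0$ to $z$ along any path in the simply connected domain $\widehat{\pi}^{-1}(\Omega_{\rho}^{\circ})$ gives
\begin{align*}
v(z)=v(z_0)+\int_{z_{0}}^{z}e^{\xi}\,\widehat{U}(\xi)^{-1}\cdot\widehat{h}(\xi)\,d\xi.
\end{align*}
The initial condition $\widehat{Y}(z_0)=\widehat{Y_{0}}$ together with $\widehat{U}(z_0)=\eins$ from (\ref{raute6hom}) forces $v(z_0)=\widehat{Y_{0}}$, and then multiplying by $\widehat{U}(z)$ yields the claimed formula, with the second line obtained by distributing $\widehat{U}(z)$ inside the integral (permissible since $\widehat{U}(z)$ is independent of the integration variable $\xi$).

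Uniqueness is not really a further obstacle: it follows from the fact that the difference of two solutions satisfies the homogeneous equation (\ref{raute6hom}) with zero initial value at $z_{0}$, and hence vanishes identically on the simply connected domain by the standard existence and uniqueness theorem for linear ODE systems, which was already invoked in the discussion preceding (\ref{raute6'}). The only mildly delicate point is making sure the path-integral is well-defined and path-independent; this is immediate because $\widehat{\pi}^{-1}(\Omega_{\rho}^{\circ})$ is a half-plane, hence simply connected, so the holomorphic integrand has a primitive. There is no real main obstacle here—the lemma is essentially a verification once the variation-of-parameters ansatz is set up, and the only care needed is the non-commutativity of the matrices, which is handled automatically by keeping $\widehat{U}(z)$ on the left throughout.
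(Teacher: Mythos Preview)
Your proposal is correct and follows essentially the same approach as the paper: both use the variation-of-parameters substitution $V(z)=\widehat{U}(z)^{-1}\widehat{Y}(z)$ (your $v$), compute that $\dot V(z)=e^{z}\widehat{U}(z)^{-1}\widehat{h}(z)$, and integrate. Your version is simply more detailed in spelling out the path-independence and uniqueness considerations, which the paper leaves implicit.
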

\begin{proof}
The map $V(z):=\widehat{U}(z)^{-1}\cdot\widehat{Y}(z)$ satisfies 
\begin{align*}
\frac{d}{dz}V(z)=e^z\,\widehat{U}(z)^{-1}\cdot\widehat{h}(z).
\end{align*}
Hence, the claimed identity follows from the fundamental theorem of calculus.
\end{proof}

\subsection{Discussion of the monodromy map} 
In the present subsection we prove that the point $s=0$ is a removable singularity of the monodromy map $M$.
Furthermore, we examine whether the matrices $M(s)$ and $M(s_0)$ are conjugate for all $s\in\Omega_{\rho}$.

\smallskip

Using (\ref{raute7'}) one proves easily that 
\begin{align*}
M(e^z)=\widehat{U}(z)\cdot M(e^{z_0})\cdot\widehat{U}(z)^{-1}
\end{align*}
and thus the matrices $M(s)$ and $M(s_0)$ are conjugate for $s\neq 0$.
Consequently, for $s\neq 0$ the characteristic polynomials of the monodromy matrices $M(s)$ coincide.
The same holds for the minimal polynomials of the matrices $M(s)$ with $s\neq 0$.

\smallskip

Notice, that $\lim_{x\rightarrow\infty}(e^{z-x})=0$. However,
the difficulty consists in the fact that in general either the limes 
$\lim_{x\rightarrow\infty}\widehat{U}(z-x)$ does not exist or is not an invertible matrix.  
We overcome these difficulties by a more subtle covering construction. In what follows we consider the
differential equation

\begin{align*}
%\label{raute5h}
\frac{d}{ds}\widetilde{U}+\frac{1}{s}\,A(s)\cdot\widetilde{U}=0
\end{align*}

along the curves $c_{\sigma}:t\mapsto \sigma\, e^{it}$. We are interested only in the solutions $U(\sigma,t)=\widetilde{U}\circ c_{\sigma}(t)$
with $U(\sigma,0)=\eins$. Hence, we consider the parameter dependent initial value problem

\begin{align}
\label{raute10}
\frac{d}{dt}U(\sigma,t)+i\,A(\sigma\, e^{it})\cdot U(\sigma,t)=0,\hspace{1cm}U(\sigma,0)=\eins.
\end{align}

If $\sigma=e^{z_0}$, then the map $t\mapsto \widehat{U}(z_0+i t)$ solves the initial value problem (\ref{raute10}).
In other words, we get

\begin{align*}
%\label{raute11}
M(\sigma)=U(\sigma,2\pi).
\end{align*}

Since $A$ is holomorphic we get $A(s)=A_0+i\,\widetilde{A_1}(s)$ with $\widetilde{A_1}$ holomorphic.
Plugging in this expression in (\ref{raute10}) we get

\begin{align*}
%\label{raute10'}
\frac{d}{dt}U(\sigma,t)+i\,\big(A_0+i\,\widetilde{A_1}(s)\big)\cdot U(\sigma,t)=0,\hspace{1cm} U(\sigma,0)=\eins.
\end{align*}

Now we let converge $\sigma$ against $0$ in the preceding initial value problem.
The solution $U$ depends differentiable on $\sigma$ and it is obvious that

\begin{align*}
U(0,t)=\exp(-i t\,A_0)
\end{align*}

holds. Hence, we have proven the following proposition.

\begin{Prop}
The monodromy map $M$ is continuous and thus holomorphic continuable to $\Omega_{\rho}$. Furthermore, the identities
\begin{align*}
M(0)=U(0,2\pi)=\exp(-2\pi i\,A_0)
\end{align*}
hold.
\end{Prop}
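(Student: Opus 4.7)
The plan is to exploit the parameter-dependent initial value problem (\ref{raute10}) already set up before the statement, together with the standard theorem on continuous (indeed holomorphic) dependence of solutions of a linear ODE on parameters and initial data. The key point is that since $A$ is holomorphic on all of $\Omega_{\rho}$, the coefficient map $(\sigma,t)\mapsto iA(\sigma e^{it})$ is jointly continuous on $\Omega_{\rho}\times[0,2\pi]$ and holomorphic in $\sigma$; therefore the unique solution $U(\sigma,t)$ of (\ref{raute10}) depends continuously on $\sigma$, uniformly for $t\in[0,2\pi]$, and holomorphically in $\sigma$ for $\sigma\in\Omega_{\rho}^{\circ}$. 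Evaluating at $t=2\pi$ then gives continuity of $M(\sigma)=U(\sigma,2\pi)$ at $\sigma=0$.

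To identify the limit value $M(0)$, I specialise (\ref{raute10}) to $\sigma=0$. Since $A(0)=A_0$ and the factor $e^{it}$ is multiplied by $\sigma=0$, the coefficient becomes the constant matrix $iA_0$, so the initial value problem reduces to
\begin{equation*}
\frac{d}{dt}U(0,t)+iA_0\cdot U(0,t)=0,\qquad U(0,0)=\eins,
\end{equation*}
whose unique solution is $U(0,t)=\exp(-itA_0)$. Substituting $t=2\pi$ yields $M(0)=\exp(-2\pi iA_0)$, which is the second asserted identity.

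For the holomorphic continuation, I recall from the previous subsection that $M$ is already holomorphic on $\Omega_{\rho}^{\circ}=\Omega_{\rho}\setminus\{0\}$, and the argument just described shows that $M$ is continuous at the puncture. Riemann's removable singularity theorem, applied entrywise to the matrix-valued function $M$, then produces the desired holomorphic extension to all of $\Omega_{\rho}$.

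The main subtlety — and the reason the preceding paragraph resorts to the circle reparametrisation rather than passing directly to the limit in $M(e^{z})=\widehat{U}(z+2\pi i)\cdot\widehat{U}(z)^{-1}$ — is that $\widehat{U}$ itself typically has no limit, or no invertible limit, as $\Re(z)\to-\infty$, so the naive approach of letting $x\to\infty$ in $\widehat{U}(z-x)$ is doomed. The parametrisation along the circles $c_{\sigma}:t\mapsto \sigma e^{it}$ converts the problem into an ODE on the compact interval $[0,2\pi]$ with parameter $\sigma\in\Omega_{\rho}$, to which continuous-dependence applies cleanly; this is the sole technical step of the argument, and all other assertions in the proposition follow by direct inspection of (\ref{raute10}).
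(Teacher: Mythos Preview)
Your proof is correct and follows essentially the same route as the paper: both use the parameter-dependent IVP (\ref{raute10}) on the circles $c_\sigma$, invoke continuous dependence on the parameter $\sigma$ to pass to the limit $\sigma\to 0$, and read off $U(0,t)=\exp(-itA_0)$ from the constant-coefficient system at $\sigma=0$. Your explicit appeal to Riemann's removable singularity theorem makes precise the ``continuous and thus holomorphic'' clause in the statement, and your final paragraph correctly articulates why the circle reparametrisation is needed---exactly the obstruction the paper flags just before the proposition.
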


\begin{Cor}
The minimal polynomial of the matrices $M(s)$, $s\neq 0$, annuls $M(0)$.
\end{Cor}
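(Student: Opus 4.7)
The plan is to exploit the continuity (in fact holomorphy) of $M$ on all of $\Omega_\rho$ established in the preceding proposition, together with the fact that the minimal polynomial of $M(s)$ is independent of $s$ for $s\neq 0$.

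First I would fix an arbitrary $s_1 \in \Omega_\rho^\circ$ and let $p(x)$ denote the minimal polynomial of $M(s_1)$. Because the earlier discussion shows $M(s) = \widehat{U}(z)\,M(s_1)\,\widehat{U}(z)^{-1}$ whenever $s = e^z$, each $M(s)$ with $s \neq 0$ is conjugate to $M(s_1)$. Since the minimal polynomial is invariant under conjugation, $p$ is the minimal polynomial of every $M(s)$ with $s\in\Omega_\rho^\circ$, and in particular
\begin{equation*}
p(M(s)) = 0 \qquad \text{for all } s \in \Omega_\rho^\circ.
\end{equation*}

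Next I would consider the map $\Omega_\rho \to \operatorname{End}(\C^n)$, $s\mapsto p(M(s))$. By the preceding proposition, $M$ extends holomorphically to all of $\Omega_\rho$, so $s\mapsto p(M(s))$ is holomorphic, hence continuous, on $\Omega_\rho$. Since this map vanishes identically on the punctured domain $\Omega_\rho^\circ$ and $0$ lies in its closure, continuity at $s=0$ forces
\begin{equation*}
p(M(0)) = \lim_{s\to 0} p(M(s)) = 0.
\end{equation*}
Combined with the identification $M(0) = \exp(-2\pi i A_0)$ from the proposition, this is exactly the claim.

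There is no real obstacle here; the entire content of the corollary is packaged into the already-established continuous extension of $M$ across the singular point $s=0$ and the conjugacy of the monodromy matrices on $\Omega_\rho^\circ$. The only thing one has to be careful about is noting that the minimal polynomial is genuinely the \emph{same} for all $s\neq 0$ (a conjugacy-invariance remark), after which the proof reduces to a one-line continuity argument.
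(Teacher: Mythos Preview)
Your proof is correct and is exactly the argument the paper has in mind: the corollary is stated without proof immediately after the proposition establishing that $M$ extends continuously to $s=0$, and the paper has already observed just before that the minimal polynomials of $M(s)$ for $s\neq 0$ all coincide, so the claim follows by passing to the limit in $p(M(s))=0$.
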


\begin{flushleft}
{\bf{Warning}}: The fact that $M$ is holomorphic continuable to $\Omega_{\rho}$ does not imply that $M(0)$ is conjugate to a matrix $M(\sigma)$ with $\sigma\in\C^{*}$. A well-known counterexample are the nilpotent matrices $M(\sigma)=\left(\begin{array}{cc} 0&\sigma \\ 0&0 \end{array}\right)$,
which are pairwise conjugate for $\sigma\neq 0$.
However, $\lim_{\sigma\rightarrow 0}M(\sigma)=0$, i.e., they converge against a matrix in a different conjugacy class.
\end{flushleft}

\smallskip

\begin{Lem}
The preceding counterexample also works in the context of monodromy.
\end{Lem}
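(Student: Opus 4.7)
The plan is to exhibit an explicit $2 \times 2$ linear regular-singular system whose monodromy map realises the same conjugacy-class jump as in the preceding warning. I would take $n = 2$, $\rho = \infty$, and
\[ A(s) = \begin{pmatrix} 0 & s \\ 0 & 1 \end{pmatrix}, \]
an entire matrix-valued function. Since $A_0 = A(0) = \operatorname{diag}(0,1)$ has integer eigenvalues, the preceding proposition already forces $M(0) = \exp(-2\pi i A_0) = \eins$ for free.

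The next step is to integrate the pulled-back homogeneous system $\tfrac{d}{dz}\hat U + \hat A(z)\hat U = 0$, $\hat A(z) = A(e^z)$, with initial condition $\hat U(0) = \eins$. Because $\hat A$ is upper-triangular, the system decouples: the second row gives $e^{-z}$ on the diagonal, and back-substitution in the first row produces a $-z$ contribution, leading to the closed form
\[ \hat U(z) = \begin{pmatrix} 1 & -z \\ 0 & e^{-z} \end{pmatrix}. \]

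A direct computation of $M(e^z) = \hat U(z + 2\pi i)\,\hat U(z)^{-1}$ then causes the $z$-dependence to collapse (as the general theory guarantees it must), leaving
\[ M(s) = \begin{pmatrix} 1 & -2\pi i\,s \\ 0 & 1 \end{pmatrix}, \qquad s \in \C. \]
This $M$ is holomorphic on all of $\C$ and agrees with $\eins$ at $s = 0$, confirming consistency with the previous proposition. For each $s \neq 0$ the matrix $M(s)$ is unipotent with a single Jordan block of size $2$; hence the $M(s)$ with $s \neq 0$ form a single conjugacy class (all conjugate to $\begin{pmatrix} 1 & 1 \\ 0 & 1 \end{pmatrix}$ via $\operatorname{diag}(-2\pi i\,s,\,1)$), while $M(0) = \eins$ lies in the disjoint conjugacy class of the identity. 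Moreover, the ``deviation'' $M(s) - \eins = \begin{pmatrix} 0 & -2\pi i\,s \\ 0 & 0 \end{pmatrix}$ reproduces up to the constant factor $-2\pi i$ exactly the matrix $\begin{pmatrix} 0 & s \\ 0 & 0 \end{pmatrix}$ of the warning, making the correspondence with the preceding counterexample literal.

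There is no genuine obstacle: the whole argument reduces to writing down the fundamental solution of an upper-triangular $2\times 2$ system and to verifying the already-guaranteed $z$-independence of $\hat U(z+2\pi i)\hat U(z)^{-1}$; both are elementary calculations, and the bulk of the work was done in constructing the theory leading up to the proposition on $M(0)$.
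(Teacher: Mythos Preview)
Your proof is correct and follows essentially the same approach as the paper: the paper also exhibits an explicit upper-triangular $2\times 2$ system, namely $A(s)=\begin{pmatrix}\lambda & s\\ 0 & \lambda+1\end{pmatrix}$ for a parameter $\lambda\in\C$, computes the fundamental solution, and obtains $M(\sigma)=e^{-2\pi i\lambda}\begin{pmatrix}1 & -2\pi i\,\sigma\\ 0 & 1\end{pmatrix}$; your example is precisely the special case $\lambda=0$, which already suffices.
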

\begin{proof}
For given $\lambda\in\C$ we consider the matrix

\begin{align*}
A(s)=A_0+sA_1,\hspace{0.2cm}\mbox{with}\hspace{0.2cm}A_0=\left(\begin{array}{cc} \lambda&0 \\ 0&\lambda+1 \end{array}\right)\hspace{0.2cm}\mbox{and}\hspace{0.2cm}
A_1=\left(\begin{array}{cc} 0&1 \\ 0&0 \end{array}\right).
\end{align*}

Hence,

\begin{align*}
%\label{raute12a}
M(0)=e^{-2\pi i\,\lambda}\cdot\eins
\end{align*}

and the differential equation system for the fundamental solution is given by

\begin{align*}
\frac{d}{dz}\widehat{U}+A_0\cdot\widehat{U}+e^z\,A_1\cdot\widehat{U}=0
\end{align*}

or, equivalently, in terms of the map $\widehat{V}(z)=\exp(zA_0)\cdot\widehat{U}(z)$ by

\begin{align*}
0=\frac{d}{dz}\widehat{V}+\left(\begin{array}{cc} 0&1 \\ 0&0 \end{array}\right)\cdot\widehat{V}.
\end{align*} 

In other words, we get

\begin{align*}
0=\frac{d}{dz}\big( \exp(z\left(\begin{array}{cc} 0&1 \\ 0&0 \end{array}\right))\widehat{V}\big)=\frac{d}{dz}\big(\left(\begin{array}{cc} 1&z \\ 0&1 \end{array}\right)\widehat{V}\big).
\end{align*}

Consequently, the fundamental solution is --- up to a right multiplication with an invertible matrix --- of the form

\begin{align*}
\widehat{U}(z)=\exp(-z\,A_0)\cdot\widehat{V}(z)=\exp(-z\,A_0)\cdot\left(\begin{array}{cc} 1&-z \\ 0&1 \end{array}\right)=
e^{-\lambda\,z}\,\left(\begin{array}{cc} 1&-z \\ 0&e^{-z} \end{array}\right).
\end{align*}

Hence,

\begin{align*}
M(e^z)=\widehat{U}(z+2\pi i)\cdot\widehat{U}(z)^{-1}=e^{-2\pi i\,\lambda}\,\left(\begin{array}{cc} 1&-2\pi i\,e^z \\ 0&1 \end{array}\right),
\end{align*}

and thus

\begin{align*}
%\label{raute12b}
M(\sigma)=e^{-2\pi i\,\lambda}\, \left(\begin{array}{cc} 1&-2\pi i\,\sigma \\ 0&1 \end{array}\right).
\end{align*}

Obviously, the matrix $M(0)$ is semi-simple and thus in particular not conjugate to $M(\sigma)$, $\sigma\neq 0$, since these matrices have a nilpotent part.
\end{proof}

\subsection{Geometric properties of the monodromy map}
\label{sub7}
The map

\begin{align*}
\varphi:(z,\widehat{Y})\mapsto (z+2\pi i,M(e^z)\cdot\widehat{Y})
\end{align*}

is fixpoint free and biholomorphic on the domain $\widehat{\pi}^{-1}(\Omega_{\rho}^{\circ})\times\C^n\subset\C^{n+1}$.
Hence, we get a free action of $\mathbb{Z}=\left\langle \varphi\right\rangle$. Consequently, we obtain a $n$-dimensional holomorphic vector bundle

\begin{align*}
%\label{raute13}
E:=(\widehat{\pi}^{-1}(\Omega_{\rho}^{\circ})\times\C^n)/\left\langle \varphi\right\rangle \rightarrow \Omega_{\rho}^{\circ}
\end{align*}

and the fundamental solution $\widehat{U}$ of (\ref{raute6hom}) is a holomorphic trivialization of this vector bundle.

\smallskip

The solutions of the inhomogeneous system (\ref{raute6}) are --- under certain conditions --- in a holomorphic $n$-dimensional
affine bundle $E_{\mbox{aff}}\rightarrow\Omega_{\rho}^{\circ}$, on which the vector bundle $E\rightarrow \Omega_{\rho}^{\circ}$ acts naturally by addition

\begin{center}
    \makebox[0pt]{
\begin{xy}
  \xymatrix{
      A \ar[r]^{+} \ar[d]   &   E_{\mbox{aff}}\ar[d]  \\
      \Omega_{\rho}^{\circ}\ar[r]_{id}   &   \Omega_{\rho}^{\circ}  }
\end{xy}}
\end{center}

where $E\times_{\Omega_{\rho}^{\circ}} E_{\mbox{aff}}=\left\{(\widehat{Y},f)\in E_{\mbox{aff}}\,\lvert\,\pi(\widehat{Y})=\pi(f)\right\}$
denotes the fiber product of these bundles.

\subsection{Non-linear regular-singular systems}
\label{sub8}
Let 
\begin{align}
\label{raute14}
0=\frac{d}{d\xi}Y+\frac{1}{\xi}\,f(\xi,Y),
\end{align}

be a differential system defined on a punctured neighborhood $\Omega_{\rho}=B(0,\rho)-\left\{0\right\}\subset\C$ of $0$
where $f:B(0,\rho)\times\C^n\rightarrow\C^n$ is a differentiable/ holomorphic function.
Such a system is called \textit{a local, non-linear, regular-singular differential equation system}.

\smallskip

Notice, that every such function $f$ possesses a expansion of the form

\begin{align*}
%\label{raute14'}
f(\xi,Y)=\sum_{k=0}^{\infty}f_{k}(\xi,Y\,...\,Y),
\end{align*}
 
where we consider $f_k$ as holomorphic map in the $k$-linear maps, i.e., $B(0,\rho)\rightarrow L^k(\C^n\times\, ...\,\times\C^n,\C^n)$.

\smallskip

We call (\ref{raute14}) \textit{weakly non-linear} in the singularity at $z=0$ if and only if

\begin{align*}
f_{k}(\xi,Y\,...\,Y)=\xi\,\widehat{f_{k}}(\xi,Y\,...\,Y)
\end{align*}
for all $k\geq 2$, where $\widehat{f_{k}}:B(0,\rho)\rightarrow L^k(\C^n\times\, ...\,\times\C^n,\C^n)$ is holomorphic.

\smallskip

Here, too we consider the pull back of (\ref{raute14}) under $\exp$

\begin{align*}
\widehat{\Omega_{\rho}}:=\left\{z\in\C\,\lvert\,\mbox{im}(z)<\ln(\rho)\right\}\rightarrow\Omega_{\rho}, z\mapsto\xi=e^{z},
\end{align*}

i.e., we consider the differential equation system

\begin{align*}
%\label{raute15}
0=\frac{d}{dz}\big(Y\circ\exp\big)+f\big(e^z,Y\circ\exp(z)\big).
\end{align*}

Thus we obtain a simply connected domain of definition and, furthermore, we avoid monodromy-problems and multi-valued functions.

\subsubsection{Continuous continuation of the solutions to $\xi=0$}
\begin{Lem}
Let $\xi_0\in\Omega_{\rho}$ and let $Y$ be a solution of (\ref{raute14}) along the line $t\mapsto t\xi_0$, $t\in (0,1]$.
Furthermore, assume that the map $t\mapsto Y(t\,\xi_0)$ is continuous continuable to $t=0$ with $\lim_{t\rightarrow 0}Y(t\,\xi_0)=:Y_0$.
Then,

\begin{align*}
f(0,Y_0)=0
\end{align*}

holds.
\end{Lem}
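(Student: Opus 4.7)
The plan is to reparametrize the equation along the chosen ray and then exploit the logarithmic divergence of $\int_0 \frac{ds}{s}$ to rule out any nonzero limit of $f$ at the singular point.

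First I set $u(t) := Y(t\xi_0)$ for $t \in (0,1]$. The chain rule together with \eqref{raute14} yields
\begin{align*}
    t\,u'(t) = -f(t\xi_0, u(t)), \qquad t\in(0,1],
\end{align*}
and $u(t) \to Y_0$ as $t \to 0^+$ by hypothesis. Integrating this identity from an arbitrary $t \in (0,1]$ up to $1$ gives the key integral formula
\begin{align*}
    u(1) - u(t) = -\int_t^1 \frac{1}{s}\,f(s\xi_0, u(s))\,ds,
\end{align*}
whose left-hand side converges to the finite vector $u(1) - Y_0$ as $t \to 0^+$.

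Next I argue by contradiction. Assume $f(0, Y_0) \neq 0$ and pick a linear functional $\ell \in (\mathbb{C}^n)^{*}$ with $\ell(f(0, Y_0)) = 1$. Since $f$ is continuous at $(0, Y_0)$ and $u(s) \to Y_0$, there exists $\delta \in (0,1]$ such that $\Real\,\ell(f(s\xi_0, u(s))) \geq \tfrac{1}{2}$ for all $s \in (0, \delta]$. Applying $\ell$ to the integral formula, taking real parts, and splitting the domain of integration at $\delta$, the contribution on $[t, \delta]$ is bounded above by $-\tfrac{1}{2}\log(\delta/t)$ while the contribution on $[\delta, 1]$ stays bounded. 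Consequently $\Real\,\ell(u(1) - u(t)) \to -\infty$ as $t \to 0^+$, contradicting convergence of $u(t)$ to $Y_0$. This forces $f(0, Y_0) = 0$.

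The argument is essentially routine; the only mildly subtle point is the reduction of the vector-valued complex condition $f(0, Y_0) \neq 0$ to a useful real scalar inequality via a suitable linear functional. Once that is in place, the logarithmic divergence of $\int_0 \frac{ds}{s}$ immediately supplies the contradiction, and I do not anticipate any further obstacle.
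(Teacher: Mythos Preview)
Your proof is correct and follows essentially the same route as the paper: integrate the equation along the ray, assume $f(0,Y_0)\neq 0$, and use continuity together with the divergence of $\int_0 \tfrac{ds}{s}$ to force a contradiction with the boundedness of $u(1)-u(t)$. The only cosmetic difference is that the paper extracts the lower bound via the norm and the reverse triangle inequality $|\int \tfrac{1}{s}g(s)\,ds|\ge |W_0|\ln(\delta/t)-\int \tfrac{1}{s}|g(s)-W_0|\,ds$, whereas you pass through a linear functional and real parts; the underlying idea is identical.
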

\begin{proof}
The main theorem of calculus implies

\begin{align*}
Y(t_0\,\xi_0)-Y_0=\int_{0}^{t_0}\frac{1}{t}\,\frac{1}{\xi_0}f(t\,\xi_0,Y(t\,\xi_0))\,dt.
\end{align*}

Suppose that $W_0:=\frac{1}{\xi_0}f(0,Y_0)\neq 0$.
Then, we obtain

\begin{align*}
\lvert \frac{1}{\xi_0}f(t\,\xi_0,Y(t\,\xi_0))-W_0\lvert\,\leq\,\frac{1}{2}\,\lvert W_0\lvert
\end{align*}

for all $t\in\left[0,t_0\right]$ under the condition, that $t_0\in(0,1]$ is sufficiently small.
This in turn would imply

\begin{align*}
\lvert\int_{\theta}^{t_0}\frac{1}{t}\,\frac{1}{\xi_0}f(t\,\xi_0,Y(t\,\xi_0))\,dt\lvert\, &\geq\,\lvert \big(\int_{\theta}^{t_0}\frac{1}{t}\,dt\big)\cdot W_0\lvert-\int_{\theta}^{t_0}\frac{1}{t}\lvert \frac{1}{\xi_0}f(t\,\xi_0,Y(t\,\xi_0))-W_0\lvert\,dt\\
\,&\,\geq \frac{1}{2}\,\ln(\frac{t_0}{\theta})\,\lvert W_0\lvert.
\end{align*}

Consequently, we would get

\begin{align*}
\lim_{\theta\searrow 0}\lvert Y(t_0\,\xi_0)-Y(\theta\,\xi_0)\lvert=\lim_{\theta\searrow 0}\lvert\int_{\theta}^{t_0}\frac{1}{t}\, \frac{1}{\xi_0}f(t\,\xi_0,Y(t\,\xi_0))\,dt\lvert= +\infty.
\end{align*}

However, this contradicts the continuity of the map $\theta\mapsto \lvert Y(t_0\,\xi_0)-Y(\theta\,\xi_0)\lvert$ in $\theta=0$.

\end{proof}

\subsubsection{Normalization of continuous continuable solutions}

Let $Y$ be a solution of (\ref{raute14}) along the line $t\mapsto t\xi_0$, $t\in (0,1]$,
such that the map $t\mapsto Y(t\,\xi_0)$ is continuous continuable to $t=0$ with $\lim_{t\rightarrow 0}Y(t\,\xi_0)=:Y_0$.
Then the map

\begin{align*}
\widetilde{Y}:=Y-Y_0
\end{align*}

also solves a non-linear, regular-singular system with the new right hand side

\begin{align*}
%\label{raute16}
\widetilde{f}(\xi,\widetilde{Y}):=f(\xi,\widetilde{Y}+Y_0)
\end{align*}

and the corresponding expansion terms $\widetilde{f_k}$.

\medskip

The preceding transformation has the following properties:
\begin{enumerate}
	\item $\widetilde{f}(0,0)=f(0,Y_0)=0$,
	\item if $f$ is weakly non-linear, then $\widetilde{f}$ is also weakly non-linear and $$\widetilde{f_1}(0,\widetilde{Y})=f(0,\widetilde{Y})=:A_0\cdot\widetilde{Y}$$ holds.
\end{enumerate}

Consequently, we obtain

\begin{align*}
%\label{raute14w}
0=\frac{d}{d\xi}\widetilde{Y}+\frac{1}{\xi}\, A_0\cdot\widetilde{Y}+\widetilde{f_{red}}(\xi,\widetilde{Y}).
\end{align*}

\subsection{Smooth solutions of the non-linear regular-singular system}

\begin{Prop}
\label{satz}
Let $Y$ be a local solution --- i.e., it is defined in a punctured neighborhood of $\xi=0$ --- of the regular-singular system

\begin{align}
\label{stern}
0=\frac{d}{d\xi}Y+\frac{1}{\xi}\,f(\xi,Y),
\end{align}

where $f$ is an analytic function.
Then the following statements hold:

\begin{enumerate}
	\item \label{i} if $Y$ is continuous continuable to $\xi=0$ with $\lim_{\xi\rightarrow 0}Y(\xi)=Y_0$, then 
	\begin{align*}
	f(0,Y_0)=0,
	\end{align*}
	\item \label{ii} if $Y$ is in addition differentiable at the point $\xi=0$ with $\frac{d}{d\xi}Y(0)=Y_1$, or if $Y^{'}$ is continuous continuable to $\xi=0$
	with $\lim_{\xi\rightarrow 0}Y^{'}(\xi)=Y_1$, then we get
	\begin{align*}
	0=a_0+(\eins+A_0)\cdot Y_1
	\end{align*}
	with $a_0:=\frac{\partial f}{\partial \xi}(0,Y_0)$ and $A_0:=\frac{\partial f}{\partial Y}(0,Y_0)$.
\end{enumerate}
\end{Prop}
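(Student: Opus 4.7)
The plan is to dispatch part (\ref{i}) immediately by citing the lemma in the previous subsection on continuous continuation of solutions to $\xi = 0$: this is exactly the content of that earlier lemma, so nothing further is required. The substance of the proposition lies in part (\ref{ii}).

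For part (\ref{ii}), the core idea is to rearrange equation \eqref{stern} as $\frac{d}{d\xi}Y(\xi) = -\frac{1}{\xi}\, f(\xi, Y(\xi))$ and pass to the limit $\xi \to 0$. Since $f$ is analytic (or even merely $C^1$), a first-order Taylor expansion around the base point $(0, Y_0)$ yields
\begin{align*}
f(\xi, Y) = f(0, Y_0) + \xi\, a_0 + A_0 \cdot (Y - Y_0) + R(\xi, Y),
\end{align*}
where $R(\xi, Y) = o(|\xi| + |Y - Y_0|)$ as $(\xi, Y) \to (0, Y_0)$. By part (\ref{i}) the leading term $f(0, Y_0)$ vanishes, and dividing by $\xi$ gives
\begin{align*}
-\tfrac{1}{\xi}\, f(\xi, Y(\xi)) = -a_0 - A_0 \cdot \tfrac{Y(\xi) - Y_0}{\xi} - \tfrac{R(\xi, Y(\xi))}{\xi}.
\end{align*}

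Under the differentiability assumption at $\xi = 0$, the difference quotient $\frac{Y(\xi) - Y_0}{\xi}$ converges to $Y_1$, and in particular $|Y(\xi) - Y_0| = O(|\xi|)$; this forces $R(\xi, Y(\xi))/\xi \to 0$ because $R(\xi, Y) = o(|\xi| + |Y - Y_0|)$. Combining these facts with the hypothesis that the left-hand side of the rearranged ODE tends to $Y_1$ produces the identity $Y_1 = -a_0 - A_0 \cdot Y_1$, which is equivalent to the claimed equation $0 = a_0 + (\eins + A_0)\cdot Y_1$.

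Finally, the alternative formulation of the hypothesis, namely that $Y'$ is continuously continuable to $\xi = 0$ with limit $Y_1$, reduces to the first: continuity of $Y$ up to $\xi = 0$ together with $Y'(\xi) \to Y_1$ implies, via the mean value theorem applied on the interval $[0, \xi]$, that $\frac{Y(\xi) - Y_0}{\xi} = Y'(\xi^\ast)$ for some $\xi^\ast \in (0, \xi)$, and hence $Y$ is differentiable at $0$ with derivative $Y_1$. The preceding argument then applies verbatim. The only genuine subtlety is controlling the Taylor remainder, but once differentiability of $Y$ at $0$ is established the bound $|Y(\xi) - Y_0| = O(|\xi|)$ makes this automatic.
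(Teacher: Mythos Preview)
Your treatment of part~(\ref{i}) matches the paper's: both simply invoke the earlier lemma on continuous continuation (the paper routes it through Lemma~\ref{lemma2}, but the content is identical).

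For part~(\ref{ii}) your direct Taylor-expansion argument is closely related to, but organized differently from, the paper's. The paper introduces $\widehat{Y}(\xi):=\tfrac{1}{\xi}(Y(\xi)-Y_0)$, shows in Lemma~\ref{lemma1} that $\widehat{Y}$ again satisfies a regular-singular system, now with $\widehat{f}(0,\widehat{Y})=a_0+(\eins+A_0)\widehat{Y}$, observes in Remark~\ref{remark1} that either hypothesis of~(\ref{ii}) makes $\widehat{Y}$ continuous at $0$ with value $Y_1$, and then applies part~(\ref{i}) to $\widehat{Y}$. Your limit computation is exactly the evaluation $\widehat{f}(0,Y_1)=0$, so the underlying calculation is the same; the paper's packaging has the advantage of recycling~(\ref{i}) verbatim and of setting up the iterative structure used later in~\S\ref{sub10}.

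There is, however, one genuine slip. Under the \emph{first} alternative of~(\ref{ii}) you claim as a hypothesis that ``the left-hand side of the rearranged ODE tends to $Y_1$''. It is not: the hypothesis $Y'(0)=Y_1$ says the \emph{difference quotient} $\tfrac{Y(\xi)-Y_0}{\xi}$ tends to $Y_1$, not that $\lim_{\xi\to 0}Y'(\xi)=Y_1$. What your computation actually establishes is that $\lim_{\xi\to 0}Y'(\xi)$ exists and equals $-a_0-A_0 Y_1$. To finish you must argue that this limit coincides with $Y'(0)$; that follows from the integral identity $\tfrac{Y(\xi)-Y_0}{\xi}=\int_0^1 Y'(t\xi)\,dt$ (which is also the correct replacement for the mean value theorem here --- your pointwise equality $\tfrac{Y(\xi)-Y_0}{\xi}=Y'(\xi^{\ast})$ fails for $\C^n$-valued maps). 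Combining with the hypothesis $Y'(0)=Y_1$ then yields $Y_1=-a_0-A_0 Y_1$. So the repair is easy, but as written that step is unjustified.
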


\begin{Bem}
\label{remark1}
The additional prerequisites in \ref{ii} guarantee each that the function

\begin{align*}
\widehat{Y}(\xi):=\frac{1}{\xi}\, (Y(\xi)-Y_0)=\int_{0}^1Y^{'}(t\,\xi)\,dt,
\end{align*}

which is a priori only defined in a punctured neighborhood of $\xi=0$, is continuous continuable to $\xi=0$.
\end{Bem}

\begin{Lem}
\label{lemma1}
Let $f(0,Y_0)=0$. Then $Y$ is a solution of (\ref{stern}) if and only if $\widehat{Y}$ is a solution of

\begin{align}
\label{sternhut}
0=\frac{d}{d\xi}\widehat{Y}+\frac{1}{\xi}\,\widehat{f}(\xi,\widehat{Y}),
\end{align}

where

\begin{align*}
\widehat{f}(\xi,\widehat{Y}):&=\widehat{Y}+\int_0^1\big[\frac{\partial f}{\partial\xi}(t\xi,Y_0+t\xi\widehat{Y})+\frac{\partial f}{\partial Y}(t\xi,Y_0+t\xi\widehat{Y})\cdot\widehat{Y}\big]\,dt\\
&=a_0+(\eins+A_0)\cdot\widehat{Y}+\xi\,\widehat{f_1}(\xi,\widehat{Y}),
\end{align*}

with $\widehat{f_1}$ analytic.
\end{Lem}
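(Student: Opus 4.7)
The plan is to substitute $Y(\xi) = Y_0 + \xi\,\widehat{Y}(\xi)$ directly into (\ref{stern}), reduce the resulting $\tfrac1\xi f$-term via the fundamental theorem of calculus, and then read off the leading expansion of $\widehat f$ at $\xi=0$.

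First I would differentiate the relation $Y = Y_0 + \xi\widehat Y$ to get $Y'(\xi) = \widehat Y(\xi) + \xi \widehat Y'(\xi)$, so that (\ref{stern}) becomes
\begin{align*}
0 = \widehat Y(\xi) + \xi \widehat Y'(\xi) + \tfrac{1}{\xi} f\bigl(\xi,Y_0+\xi\widehat Y\bigr).
\end{align*}
The key step is to rewrite the last term. Because $f(0,Y_0)=0$, the parametrised path $t\mapsto(t\xi,Y_0+t\xi\widehat Y)$, $t\in[0,1]$, runs from the zero of $f$ to the point of interest, so the fundamental theorem of calculus gives
\begin{align*}
f(\xi,Y_0+\xi\widehat Y) &= \int_0^1 \frac{d}{dt} f\bigl(t\xi,Y_0+t\xi\widehat Y\bigr)\,dt \\
&= \xi\int_0^1 \Bigl[\tfrac{\partial f}{\partial\xi}(t\xi,Y_0+t\xi\widehat Y) + \tfrac{\partial f}{\partial Y}(t\xi,Y_0+t\xi\widehat Y)\cdot\widehat Y\Bigr]\,dt.
\end{align*}
Dividing by $\xi$ and comparing with the displayed equation above yields $0 = \tfrac{d}{d\xi}\widehat Y + \tfrac{1}{\xi}\widehat f(\xi,\widehat Y)$ with $\widehat f$ as stated; dividing instead reconstructs $Y$ from $\widehat Y$, so the equivalence of (\ref{stern}) and (\ref{sternhut}) holds in both directions.

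Finally I would expand $\widehat f$ at $\xi=0$. Setting $\xi=0$ in the integrand collapses the $t$-integral to a constant integrand and gives
\begin{align*}
\widehat f(0,\widehat Y) = \widehat Y + \tfrac{\partial f}{\partial\xi}(0,Y_0) + \tfrac{\partial f}{\partial Y}(0,Y_0)\cdot\widehat Y = a_0 + (\eins + A_0)\cdot\widehat Y.
\end{align*}
Since $f$ is analytic on $B(0,\rho)\times\C^n$, so are the two partial derivatives, and their composition with the analytic substitution $(\xi,\widehat Y)\mapsto(t\xi,Y_0+t\xi\widehat Y)$ is jointly analytic in $(t,\xi,\widehat Y)$ on the relevant domain; integration in $t$ over the compact interval $[0,1]$ preserves analyticity in $(\xi,\widehat Y)$. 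Subtracting the value at $\xi=0$ therefore gives $\widehat f(\xi,\widehat Y) - \widehat f(0,\widehat Y) = \xi\,\widehat{f_1}(\xi,\widehat Y)$ with $\widehat{f_1}$ analytic, which is the claimed form.

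The only real work is the fundamental-theorem-of-calculus rewriting of $f$, which is legitimate precisely because of the hypothesis $f(0,Y_0)=0$; everything else is algebra and preservation of analyticity under parameter integration, so I do not expect any genuine obstacle.
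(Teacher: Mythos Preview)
Your proposal is correct and follows essentially the same route as the paper's proof: differentiate the substitution $Y=Y_0+\xi\widehat Y$, then rewrite $f(\xi,Y_0+\xi\widehat Y)$ via the fundamental theorem of calculus along the ray $t\mapsto(t\xi,Y_0+t\xi\widehat Y)$ using $f(0,Y_0)=0$, and read off $\widehat f$. You simply add an explicit justification of the analyticity of $\widehat{f_1}$, which the paper leaves implicit.
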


\begin{proof}
Obviously, $\frac{d}{d\xi}Y=\xi\,\frac{d}{d\xi}\widehat{Y}+\widehat{Y}$. Consequently, using $f(0,Y_0)=0$, the identity (\ref{stern})
is equivalent to

\begin{align*}
0=\frac{d}{d\xi}\widehat{Y}+\frac{1}{\xi}\,\big[\widehat{Y}+\int_0^1\frac{1}{\xi}\frac{\partial}{\partial t}(f(t\xi,Y_0+t\xi\widehat{Y}))\,dt\big].
\end{align*}

Then the claim follows immediately by calculating the integrand.
\end{proof}

\begin{Lem}
\label{lemma2}
If $Y$ is bounded and differentiable in a punctured neighborhood $U$ of $\xi=0$ and 
and if
\begin{align*}
0=\frac{d}{d\xi}Y+\frac{1}{\xi}\, h(\xi)
\end{align*}
holds on $U$, where $h$ is a continuous function on a neighborhood of $\xi=0$, then
\begin{align*}
h(0)=0.
\end{align*}
\end{Lem}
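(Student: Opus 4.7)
The plan is to argue by contradiction, exploiting the nonintegrability of $1/t$ at $0$: if $h(0)\neq 0$, the integrand $h(\xi)/\xi$ in the antiderivative defining $Y$ is large enough near $\xi=0$ to force $Y$ to blow up.

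Suppose for contradiction that $h(0)\neq 0$. Continuity of $h$ at $0$ supplies a radius $\delta>0$ with $B(0,\delta)\setminus\{0\}\subset U$ and
\begin{equation*}
|h(\xi)-h(0)| < \tfrac{1}{2}|h(0)| \qquad \text{for all } \xi\in B(0,\delta).
\end{equation*}
Writing $\langle\cdot,\cdot\rangle$ for the Hermitian pairing on $\C^n$ (or simply $\langle a,b\rangle:=a\overline{b}$ in the scalar case), the Cauchy--Schwarz inequality yields the pointwise lower bound
\begin{equation*}
\Real\langle h(\xi),h(0)\rangle \geq |h(0)|^2 - |h(0)|\cdot|h(\xi)-h(0)| \geq \tfrac{1}{2}|h(0)|^2
\end{equation*}
on $B(0,\delta)$.

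Next I would fix $\xi_0\in B(0,\delta)\setminus\{0\}$ and restrict the ODE to the radial segment $\xi=t\xi_0$, $t\in(0,1]$. The chain rule together with the equation gives $\tfrac{d}{dt}Y(t\xi_0)=\xi_0 Y'(t\xi_0)=-h(t\xi_0)/t$, so for $0<t_1<1$,
\begin{equation*}
Y(\xi_0)-Y(t_1\xi_0) = -\int_{t_1}^1 \frac{h(t\xi_0)}{t}\, dt.
\end{equation*}
Pairing this identity with $h(0)$, taking real parts, and invoking the lower bound above yields
\begin{equation*}
\Real\langle Y(\xi_0)-Y(t_1\xi_0),\, h(0)\rangle \leq -\tfrac{1}{2}|h(0)|^2 \ln(1/t_1),
\end{equation*}
which tends to $-\infty$ as $t_1\searrow 0$. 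In particular $|Y(t_1\xi_0)|\to\infty$, contradicting the boundedness of $Y$ on $U$. Hence $h(0)=0$.

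The only mild obstacle is that $h$ is complex- (and possibly vector-) valued, so one cannot hope to bound the vector-valued integral $\int h(t\xi_0)/t\,dt$ from below by integrating $|h(\xi)|\geq|h(0)|/2$ directly. Pairing against the fixed vector $h(0)$ and taking real parts singles out one real direction in which the integrand is genuinely positive, producing the logarithmic divergence that collides with boundedness. Everything else is routine.
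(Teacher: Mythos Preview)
Your proof is correct and follows essentially the same route as the paper: argue by contradiction, integrate the equation along a radial segment $t\mapsto t\xi_0$, and use continuity of $h$ at $0$ to extract a lower bound of order $\tfrac{1}{2}|h(0)|\ln(1/t)$ that contradicts boundedness of $Y$. The only cosmetic difference is that the paper obtains the lower bound via the reverse triangle inequality $\bigl|\int_t^1 \tfrac{h(\theta\xi)}{\theta}\,d\theta\bigr| \ge \bigl|\int_t^1 \tfrac{h(0)}{\theta}\,d\theta\bigr| - \int_t^1 \tfrac{|h(\theta\xi)-h(0)|}{\theta}\,d\theta$ (which works perfectly well for vector-valued $h$, contrary to your closing remark), whereas you pair against the fixed vector $h(0)$ and take real parts; both devices accomplish the same thing.
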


\begin{proof}
Assume $h(0)\neq 0$.
The continuity of $h$ in $\xi=0$ implies

\begin{align*}
\lvert h(\xi)-h(0)\lvert\,<\,\frac{1}{2}\,\lvert h(0)\lvert
\end{align*}

for all $\xi\neq 0$ which are sufficiently small. Hence, we get

\begin{align*}
Y(\xi)-Y(t\,\xi)=\int_{t}^1\xi\, Y^{'}(\theta\,\xi)\,d\theta=-\int_{t}^1\frac{1}{\theta}\,h(\theta\xi)\,d\theta
\end{align*}

Consequently, we obtain

\begin{align*}
\lvert Y(\xi)- Y(t\,\xi)\lvert\,&\,\geq\int_t^1\frac{1}{\theta}\,\lvert h(0)\lvert\,d\theta-\int_0^1\frac{1}{\theta}\,\lvert h(\theta\,\xi)-h(0)\lvert\,d\theta\\
\,&\,\geq\lvert h(0)\lvert\,\ln(\frac{1}{t})-\frac{1}{2}\,\lvert h(0)\lvert\,\ln(\frac{1}{t})=\frac{1}{2}\,\lvert h(0)\lvert\, \ln(\frac{1}{t}).
\end{align*}

However, this contradicts the fact that the left hand side is bounded for all $t\in\left(0,1\right)$.

\end{proof}

In what follows we combine the previous results to prove Proposition\,\ref{satz}.

\begin{proof}
The prerequisite in \ref{i} guarantees that $h(\xi):=f(\xi,Y(\xi))$ is continuous in a neighborhood of $\xi=0$.
Therefore \ref{i} is a direct consequence of Lemma\,\ref{lemma2}.\\
Due to Remark\,\ref{remark1} and Lemma\,\ref{lemma1}, the claim \ref{ii} is a consequence of \ref{i}.
\end{proof}

\begin{flushleft}
{\bf{Warning}}:
The continuity of $Y$ at $\xi=0$ in general does not imply that $\widehat{Y}$ has a continuous continuation or equivalently the existence of $Y^{'}$.
An easy counterexample is the differential equation

\begin{align*}
y^{'}=\frac{1}{\xi}\,y-1
\end{align*}

with the at $\xi=0$ continuous solution 

\begin{align*}
y(\xi)=\xi\,(1+\ln(\frac{1}{\xi})).
\end{align*}

However, $y$ not continuous differentiable; the first derivative $y'(\xi)=\ln(\frac{1}{\xi})$ has an essential singularity at $\xi=0$.
Furthermore, this solution has non-trivial monodromy:

\begin{align*}
y(\xi\, e^{2\pi i\,k})=y(\xi)-2\pi i\,k\xi.
\end{align*}

In particular, a solution of the above differential equation which is restricted to the radial segment $t\mapsto t\,\xi$, $0\leq t\leq 1$, is not determined uniquely by the initial value $y(0)=0$.
\end{flushleft}

\begin{question}
Suppose that $Y$ is a continuous solution of (\ref{stern}). Does the existence of the derivative $Y'(0)$ guarantee that $Y^{'}$ is continuous in a neighborhood of $0$?
\end{question}

\begin{Bem}
\begin{enumerate}
	\item If (\ref{stern}) possesses a holomorphic and bounded solution which is defined on a punctured neighborhood 
of $\xi=0$, then this solution is holomorphic in $\xi=0$. This simply follows from Riemann's theorem on removable singularities.
	\item 
	Let a solution $Y$ of (\ref{stern}) with $Y(0)=Y_0$ which is continuous in $0$ be given.
	Suppose there exists a stability condition for the right hand side of (\ref{stern}) which guarantees the existence and uniqueness of 
	such solutions $Y$ along each radial segment $t\mapsto t\,\xi$, $0<t\leq 1$. Such a stability condition is sufficient to prove that the holomorphic
	solution, composed from such radial solutions, does not have monodromy in the punctured neighborhood of $0$.
	\end{enumerate}
\end{Bem}

\begin{flushleft}
{\bf{Remark (Regularity of $f$)}}:
The function $y(t):=e^{-1/t}$, $t\in\C^{*}$, has derivative $y'(t):=1/t^2\,e^{-1/t}$ and hence satisfies the differential equation

\begin{align*}
y^{'}(t)=\frac{1}{t^2}\, y(t).
\end{align*}

Since $\frac{1}{t}=-\ln y(t)$ one can write this differential equation in the following regular-singular form

\begin{align*}
y'(t)=\frac{1}{t}\, f(t,y)
\end{align*}

with $f(t,y)=y\,\ln(\frac{1}{y})=\frac{1}{2}\,y\,\ln(\frac{1}{y^2})$. Obviously, $f$ is continuous at $y=0$. However, $f$ is not continuous differentiable at $y=0$, this is even not the case on the real axis. Hence, in the sense of complex analysis, the point $y=0$ is an essential singularity of $f$.
\end{flushleft}

\smallskip

\begin{flushleft}
{\bf{Consequence}}:
In order to develop a theory of regular-singular systems it is crucial to demand sufficiently many differentiability for $f$ or even that $f$ is analytic.
It is not sufficient to demand continuity of $f$.
\end{flushleft}

\subsection{Solutions with given initial singular value $Y_0$}
\label{sub10}
In what follows we enumerate several \textit{facts}.
\begin{enumerate}
	\item The set of the admissible singular initial values $Y_0$ consists of the solutions of the equation
	\begin{align*}
	f(0,Y_0)=0.
	\end{align*}
	See Proposition\,\ref{satz} \ref{i}.
	\item The holomorphic solutions with given $Y_0$ are in one-to-one correspondence to the continuous/holomorphic solutions $\widehat{Y}$ of (\ref{sternhut}).
	See Proposition\,\ref{satz} \ref{ii}.
	
	\item The structure of the right hand side of (\ref{sternhut}) is much more special than that of the right hand side of (\ref{stern}).
	Indeed, due to Lemma\,\ref{lemma1} the identity (\ref{sternhut}) is given by
	  
	\begin{align*}
	0=\frac{d}{d\xi}\widehat{Y}+\frac{1}{\xi}\,(a_0+(\eins+A_0)\cdot\widehat{Y})+\widehat{f_1}(\xi,\widehat{Y}).
	\end{align*}
	
	Thus the singular term is linear in $\widehat{Y}$.
	
	\item Due to Proposition\,\ref{satz} \ref{ii} only initial values $\widehat{Y_0}=\widehat{Y}(0)=Y'(0)$ with $a_0+(\eins+A_0)\cdot\widehat{Y_0}=0$
	are admissible. Hence, the map $\widetilde{Y}(\xi):=\widehat{Y}(\xi)-\widehat{Y_0}$ is a solution of
	
	\begin{align}
	\label{sterntilde}
	0=\frac{d}{d\xi}\widetilde{Y}+\frac{1}{\xi}\,(\eins+A_0)\cdot\widetilde{Y}+\widetilde{f_1}(\xi,\widetilde{Y})
	\end{align}
	
	with $\widetilde{Y}(0)=0$ and $\widetilde{f}(\xi,\widetilde{Y})=\widehat{f}(\xi,\widehat{Y_0}+\widetilde{Y})$.
	
	\item \label{fact5} The condition $\Re(\lambda)\geq 0$ for all eigenvalues $\lambda$ of $\eins+A_0$ guarantees the existence and uniqueness of 
	the solution $\widetilde{Y}$ of (\ref{sterntilde}) with initial value $\widetilde{Y}(0)$. (??) Consequently, this condition also implies the existence and uniqueness of the solution $\widehat{Y}$ of (\ref{sternhut}) for any admissible initial value $\widehat{Y_0}$.
	
	\item \label{fact6} Each local solution $\widehat{Y}$ of (\ref{sternhut}) with $\widehat{Y}(0)=\widehat{Y_0}$ determined uniquely by the spectral condition considered in the preceding item, translates into a holomorphic solution $Y$ of (\ref{stern}) with $Y(0)=Y_0$ and $Y'(0)=\widehat{Y_0}$.
	
	\item
	The preceding considerations, do not exclude the existence of further solutions $Y$ of (\ref{stern}) with $\lim_{\xi\rightarrow 0}Y(\xi)=Y_0$ which have monodromy. It is true that the latter solutions also correspond to solutions $\widehat{Y}$ of (\ref{sternhut}), but we do not know about their asymptotic behavior. We only know that
	\begin{align*}
	\lim_{\xi\rightarrow 0}\xi\,\widehat{Y}(\xi)=0
	\end{align*}
	is satisfied.
	
\end{enumerate}

\bibliographystyle{plain}
\bibliography{mybib}

\end{document}